\def\r{\mathbb R}
\def\t{\mathbf t}
\def\n{\mathbf n}
\def\b{\mathbf b}
\newtheorem{theorem}{Theorem}[section]
\newtheorem{definition}[theorem]{Definition}
\newtheorem{remark}[theorem]{Remark}
\newtheorem{proposition}[theorem]{Proposition}
\title{Classification and construction of minimal translation surfaces in Euclidean space  }
\author{Thomas Hasanis\\
Department of Mathematics\\
               University of Ioannina\\
               45110 Ioannina, Greece\\
\texttt{thasanis@cc.uoi.gr}\\
\and
Rafael L\'opez\footnote{Partially supported by the grant no. MTM2017-89677-P, MINECO/AEI/FEDER, UE.}\\
Departamento de Geometr\'{\i}a y Topolog\'{\i}a\\
 Instituto de Matem\'aticas (IEMath-GR)\\
 Universidad de Granada\\
 18071 Granada, Spain\\
\texttt{rcamino@ugr.es}
}
\date{}
\begin{document}
\maketitle

\begin{abstract}
A  translation surface of Euclidean space $\r^3$ is the sum of two regular curves $\alpha$ and $\beta$, called the generating curves. In this paper we classify the minimal translation surfaces of $\r^3$ and we give a method of construction of explicit examples. Besides the plane and the minimal surfaces of Scherk type, it is proved that up to reparameterizations of the generating curves, any minimal translation surface is   described as $\Psi(s,t)=\alpha(s)+\alpha(t)$, where  $\alpha$  is a curve parameterized by arc length $s$, its curvature $\kappa$   is a positive solution of the   autonomous ODE $(y')^2+y^4+c_3y^2+c_1^2y^{-2}+c_1c_2=0$  and its torsion is $\tau(s)=c_1/\kappa(s)^2$. Here  $c_1\not=0$, $c_2$ and $c_3$ are constants  such that the cubic equation $-\lambda^3+c_2\lambda^2-c_3\lambda+c_1=0$ has three real roots $\lambda_1$, $\lambda_2$ and $\lambda_3$.

\end{abstract}

\noindent {\it Keywords:} translation surface, minimal surface \\
{\it AMS Subject Classification:} 53A10,  53C42

\section{Introduction and statement of the result}

 The surfaces of our study have its origin in the classical text of G. Darboux \cite[Livre I]{da} where the so-called ``surfaces d\'efinies pour des properti\'et\'es cin\'ematiques'' are considered, and later known as Darboux surfaces in the literature. A {\it Darboux surface} is defined kinematically as the movement of a curve by a uniparametric family of rigid motions of $\r^3$. Hence a parameterization of a such surface is $\Psi(s,t)=A(t)\cdot \alpha(s)+\beta(t)$, where $\alpha$ and $\beta$ are two space  curves and $A(t)$ is an orthogonal matrix. The case that we are investigating in this paper is that   $A(t)$ is the identity. More precisely, we give the following definition.

\begin{definition} A surface $S\subset\r^3$ is called a translation surface if $S$  can be locally written as the sum $\Psi(s,t)=\alpha(s)+\beta(t)$ of two space curves $\alpha:I\subset\r\rightarrow\r^3$ and $\beta:J\subset\r\rightarrow\r^3$. The curves $\alpha$ and $\beta$ are called the generating curves of $S$.  If $\alpha$ and $\beta$ are plane curves, the surface is called a translation surface of planar type.  \end{definition}

Darboux deals with translation surfaces in Secs. 81-84 \cite[pp. 137--142]{da} and its name is due to the fact that   the surface $S$  is obtained by the translation of $\alpha$ along $\beta$ (or {\it vice versa} because the roles of $\alpha$ and $\beta$ are interchanged). In particular,  all parametric curves $s=ct$ are congruent by translations (similarily for parametric curves $t=ct$).
It is natural to ask for the classification of the translation surfaces of $\r^3$ under some condition on its curvature. Recently, the authors of the present paper succeeded  with the complete classification of all translation surfaces with constant Gaussian curvature $K$, proving that  the only ones are   cylindrical surfaces and thus,   $K$ must be $0$ (\cite{hl}).

In this paper we are concerned with the following

\begin{quote}{\bf Problem:} Classify all minimal translation surfaces in $\r^3$.
\end{quote}

Recall that a minimal surface in $\r^3$ is a surface with zero mean curvature everywhere.   Of course, the plane is a trivial example of a minimal translation surface. A first approach to the posed  problem  is to assume that the  generating curves are   plane curves contained in orthogonal planes. In such a case, after an appropriate choice of coordinate system,   the     surface $S$ is locally parameterized by
$$\Psi(s,t)=(s,0,f(s))+(0,t,g(t))=(s,t,f(s)+g(t))$$
for some two smooth functions $f$ and $g$. Thus the problem transforms into finding  surfaces of the form $z=f(x)+g(y)$ with zero mean curvature. It is not difficult to see that, besides the plane and a  rigid motion, the solution is  the known Scherk surface
$$z=\frac{1}{c}\log\frac{\cos(cy)}{\cos(cx)},\quad x,y\in\left(\frac{-\pi}{2},\frac{\pi}{2}\right),$$
where $c$ is a positive constant. This surface was obtained by Scherk solving the minimal surface equation by separation of variables, namely, $z=f(x)+g(y)$ (\cite{sc}). In fact, this  surface  belongs to a uniparametric family of minimal surfaces discovered by Scherk and given by
$$\mathcal{S}_\theta(x,y)=\left(x+y\cos\theta,y\sin\theta,\frac{1}{c}\log\frac{\cos(cy)}{\cos(cx)}\right),$$
where  $\theta\in\r$ (\cite{sc}; see also \cite[\S 81]{ni}). For $\theta=0$, $\mathcal{S}_0$ is the plane and if $\theta=\pi/2$, $\mathcal{S}_{\pi/2}$ is the Scherk surface. Let us observe that  $\mathcal{S}_\theta$ is a translation surface where  the generating curves are planar but now not necessarily contained in orthogonal planes. Indeed, $\mathcal{S}_\theta$ can be expressed  as
\begin{eqnarray*}
\mathcal{S}_\theta(s,t)&=&\left(s,0,-\frac{1}{c}\log(\cos(cs))\right)+\left(t\cos\theta,t\sin\theta,\frac{1}{c}\log(\cos(ct))\right)\\
&=&\alpha(s)+\beta(t).
\end{eqnarray*}
Then $\alpha$ is contained in the $xz$-plane and $\beta$ lies  in the plane of equation $\sin\theta x-\cos\theta y=0$, which makes an angle $\theta$ with the $xz$-plane.

Other minimal translation surface, and already known by Lie, is the helicoid $X(u,v)=(\cos u\cos v,\sin u\cos v,u)$ (\cite[\S 77]{ni}). This surface is obtained by the sum of a circular helix with   itself. Indeed, let  $\alpha$ be the circular helix parameterized as $\alpha(s)=(\cos s,\sin s,s)/2$. Making the change of coordinates $s=u+v$, $t=u-v$, we find $\Psi(s,t)= \alpha(s)+\alpha(t)=X(u,v)$.

In the literature, other works have appeared on the study of translation surfaces with constant mean curvature, also in other ambient spaces: we refer \cite{Dillen98,HLin,Lopez,lomu,mm,mp}, without to be a complete list. However, in all these works, the translation surface is of planar type, so  the problem of finding such surfaces reduces into a problem of solving a PDE by separation of variables. It deserves to point out  that it was proved   in  \cite{Dillen98} that if one generating curve is planar, then the other is also planar and the surface belongs to the family of Scherk surfaces.

Definitively, besides the plane, the Scherk  surfaces and the helicoid, the motivating question for  the  present paper is as follows:

\begin{quote}
{\bf Question:} Are there any other minimal translation surfaces in $\r^3$?\end{quote}

Recently the second author   and O. Perdomo have undertaken the problem of classification in all its generality assuming that the generating curves are space curves (\cite{lp}). It was proved that  one generating curve is the rigid motion of the other one, hence we can write $\Psi(s,t)=\alpha(s)+\alpha(t)$ and if $\kappa$ and $\tau$ are the curvature  and the  torsion of $\alpha$ respectively, then  $\kappa^2\tau$ is a non-zero constant.  Furthermore, the velocity vectors  $\alpha'(s)$ must lie in a cone of the form $ \{{\mathbf x}\in\r^3: \langle A{\mathbf x},{\mathbf x}\rangle=0 \}$ for a particular symmetric matrix $A$: see \cite{lp} for details.

In this paper we offer an alternative approach to the study of the minimal translation surfaces. Besides to obtain similar results than the ones of \cite{lp}, we give a new method of construction of  minimal translation surfaces based on the resolution of an ODE   which seems to us   simpler than the methods used in \cite{lp}. The advantage  lies in the fact that provides a technique by means of a recipe that gives   a plethora of examples. The results may be summarized as follows.

\begin{theorem}[classification and construction]\label{t1} Let $\Psi(s,t)=\alpha(s)+\beta(t)$ be a minimal translation surface where $\alpha$ and $\beta$ parameterized by arc length. Suppose that the curvatures $\kappa_\alpha$ and $\kappa_\beta$ are positive everywhere and the torsions $\tau_\alpha$, $\tau_\beta$ are non-zero everywhere. Then, up to a rigid motion, the curve $\beta$ coincides with $\alpha$  and
\begin{equation}\label{eqcc}
\kappa_\alpha^2\tau_\alpha=c_1,\quad \frac{1}{\tau_\alpha}\left(\frac{\kappa_\alpha'}{\kappa_\alpha}\right)'+\frac{\kappa_\alpha^2}{\tau_\alpha}-2\tau_\alpha=c_2,
\end{equation}
where $c_1\not=0$ and $c_2$ are two constants. Furthermore,  $\kappa_\alpha$ is a positive solution of the autonomous ODE
\begin{equation}\label{eqt2}
y'^2+y^4+c_3y^2+\frac{c_1^2}{y^{2}}+c_1c_2=0
\end{equation}
for some constant $c_3$ and the curve $\alpha$ can be expressed as
$$\alpha(s)=\left(A\int^s\cos w(s),B\int^s\sin w(s),\int^s\sqrt{1-A^2\cos^2 w(s)-B^2\sin^2w(s)}\right).$$
Here  $A=\sqrt{\lambda_3/(\lambda_3-\lambda_1)}$ and $B=\sqrt{ \lambda_3/(\lambda_3-\lambda_2)}$ where $\lambda_i$, $1\leq i\leq 3$,   are the three real roots of the cubic equation
\begin{equation}\label{eqt1}
-\lambda^3+c_2\lambda^2-c_3\lambda+c_1=0
\end{equation}
 and the function $w$ is    $w(s)=\int^s\sqrt{\kappa_\alpha(s)^2+\lambda_1\lambda_2}$.

 Conversely, any minimal translation surface of $\r^3$ of non-planar type is constructed by this process. Exactly  let  $c_1\not=0$, $c_2$ and $c_3$ be three constants such that the cubic equation (\ref{eqt1})
has three real roots $\lambda_1$, $\lambda_2$, $\lambda_3$. Let  $\kappa(s)$ be a positive and non-constant solution of (\ref{eqt2}). If $\alpha(s)$ is a curve   parameterized by arc length $s$ with curvature $\kappa(s)$ and torsion $c_1/\kappa(s)^2$, then the translation surface $\Psi(s,t)=\alpha(s)+\alpha(t)$ is minimal.
\end{theorem}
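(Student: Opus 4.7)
My plan starts from the standard formulas for a surface in arc-length parameters. Because $\alpha,\beta$ are unit speed, the first fundamental form of $\Psi(s,t)=\alpha(s)+\beta(t)$ has $E=G=1$ and $F=\langle\alpha'(s),\beta'(t)\rangle$, and since $\Psi_{st}=0$ the second fundamental form has $M=0$. Hence the mean curvature equation $2H(EG-F^2)=EN+GL-2FM$ reduces to $L+N=0$, that is,
\begin{equation*}
\langle\alpha''(s)+\beta''(t),\,\alpha'(s)\times\beta'(t)\rangle=0\qquad\text{for every }(s,t).
\end{equation*}
Expanding the two triple products and using $\alpha''\times\alpha'=-\kappa_\alpha\b_\alpha$ together with $\beta'\times\beta''=\kappa_\beta\b_\beta$, this becomes the more transparent scalar identity
\begin{equation*}
\kappa_\beta(t)\,\langle\alpha'(s),\b_\beta(t)\rangle\;=\;\kappa_\alpha(s)\,\langle\b_\alpha(s),\beta'(t)\rangle,
\end{equation*}
which I shall refer to as $(\star)$.

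From $(\star)$ I would extract the relations (\ref{eqcc}) by iterated separation of variables. The recipe is to differentiate $(\star)$ alternately in $s$ and $t$, substituting the Frenet--Serret formulas $\t'=\kappa\n$, $\n'=-\kappa\t+\tau\b$, $\b'=-\tau\n$, and exploiting the hypotheses $\kappa_\alpha,\kappa_\beta>0$ and $\tau_\alpha,\tau_\beta\neq 0$ to divide by the nonzero coefficients that appear. After one round of differentiations the algebra should collapse to a relation of the shape $F(s)G(t)=\widetilde F(s)\widetilde G(t)$ that forces $\kappa^2\tau$ to be the same nonzero constant $c_1$ for both $\alpha$ and $\beta$. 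A second round should produce the second relation in (\ref{eqcc}) with a common constant $c_2$. Since $\alpha$ and $\beta$ then share the same curvature and torsion as functions of arc length, the fundamental theorem of space curves gives $\beta=\alpha$ up to a rigid motion. I expect this extraction to be the main obstacle: the two Frenet frames are coupled through $(\star)$, and the challenge is to organize the bookkeeping so that the $s$- and $t$-dependences genuinely decouple.

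For the autonomous ODE (\ref{eqt2}) I would plug $\tau_\alpha=c_1/\kappa_\alpha^2$ into the second equation of (\ref{eqcc}); the resulting second order ODE in $\kappa_\alpha$ admits a first integral which is precisely (\ref{eqt2}), with $c_3$ playing the role of the constant of integration. Vi\`ete's relations for (\ref{eqt1}) read $\lambda_1+\lambda_2+\lambda_3=c_2$, $\lambda_1\lambda_2+\lambda_1\lambda_3+\lambda_2\lambda_3=c_3$, $\lambda_1\lambda_2\lambda_3=c_1$, and a short algebraic check shows that (\ref{eqt2}) forces $\alpha'(s)$ to lie on the quadric cone $\lambda_1 x_1^2+\lambda_2 x_2^2+\lambda_3 x_3^2=0$ in a suitable orthonormal frame. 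Parameterizing this cone by $\alpha'(s)=\bigl(A\cos w(s),\,B\sin w(s),\,\sqrt{1-A^2\cos^2w(s)-B^2\sin^2w(s)}\bigr)$ with $A^2=\lambda_3/(\lambda_3-\lambda_1)$ and $B^2=\lambda_3/(\lambda_3-\lambda_2)$ verifies the cone equation identically, and imposing $|\alpha''(s)|=\kappa_\alpha(s)$ pins down $w'(s)^2=\kappa_\alpha(s)^2+\lambda_1\lambda_2$; integrating coordinate by coordinate yields the stated formula for $\alpha$. The converse is then a verification: given admissible constants $c_1,c_2,c_3$ and a positive nonconstant solution $\kappa$ of (\ref{eqt2}), the fundamental theorem of curves supplies a unique (up to rigid motion) $\alpha$ with curvature $\kappa$ and torsion $c_1/\kappa^2$, and a direct Frenet computation shows that $(\star)$ is satisfied for $\Psi(s,t)=\alpha(s)+\alpha(t)$, so the translation surface is minimal.
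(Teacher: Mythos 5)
Your starting point and overall route match the paper: the minimality condition reduces to $\kappa_\alpha\langle\b_\alpha,\t_\beta\rangle=\kappa_\beta\langle\t_\alpha,\b_\beta\rangle$ (your $(\star)$), one differentiates with Frenet--Serret, obtains the curvature--torsion relations, integrates once to get the autonomous ODE, and parameterizes $\alpha'$ on a quadric cone. But two of the load-bearing steps are asserted rather than proved, and both are exactly where the paper has to work hardest. First, your claim that one round of differentiation collapses $(\star)$ to $F(s)G(t)=\widetilde F(s)\widetilde G(t)$ and ``forces $\kappa^2\tau$ to be the same nonzero constant for both $\alpha$ and $\beta$'' is not what happens. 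Differentiating $(\star)$ three times in $s$ produces a vector $u(s)$ expressed in the Frenet frame of $\alpha$ with $\langle u,\t_\beta\rangle=0$; two further $t$-derivatives give $\langle u,\n_\beta\rangle=\langle u,\b_\beta\rangle=0$, hence $u=0$, and the vanishing of its three components yields $\kappa_\alpha^2\tau_\alpha=c_1$ and the second relation of (\ref{eqcc}) --- but these are statements about $\alpha$ alone, and the same argument applied to $\beta$ produces a priori \emph{different} constants $\bar c_1,\bar c_2$. No separation of variables identifies them. The paper identifies the constants only after introducing, at each point of each curve, the self-adjoint operator $L$ with matrix $\bigl(\begin{smallmatrix}0&0&\kappa\\ 0&-\tau&-R\\ \kappa&-R&\Sigma/\tau\end{smallmatrix}\bigr)$ in the Frenet frame, and proving from the full family of nine differentiated identities that $L_{\alpha(s)}=L_{\beta(t)}$ for all $s,t$; equality of the characteristic polynomials then gives $c_i=\bar c_i$. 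Without that step your appeal to the fundamental theorem of curves to conclude $\beta\cong\alpha$ has no foundation (and even with common constants you still need the observation that two solutions of the same first-order autonomous ODE are translates of one another, so that $\kappa_\beta(t)=\kappa_\alpha(\pm t+c_0)$ after reparameterization).

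Second, the assertion that ``a short algebraic check shows that (\ref{eqt2}) forces $\alpha'(s)$ to lie on the quadric cone $\lambda_1x_1^2+\lambda_2x_2^2+\lambda_3x_3^2=0$ in a suitable orthonormal frame'' does not hold as stated: (\ref{eqt2}) is a scalar ODE for $\kappa_\alpha$ and carries no information about the direction of $\alpha'(s)$ in space. The cone equation is $\langle L(\t_\alpha),\t_\alpha\rangle=\langle\kappa_\alpha\b_\alpha,\t_\alpha\rangle=0$ written in the eigenframe of $L$, and it is meaningful only once one knows that $L_{\alpha(s)}$ is \emph{independent of} $s$ (in the converse direction the paper verifies $L'_{\alpha(s)}=0$ directly from the Frenet equations and the relations $\kappa^2\tau=c_1$, $\Sigma/\tau-\tau=c_2$, $\Sigma+R^2+\kappa^2=-c_3$). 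The ``suitable orthonormal frame'' is precisely the common constant eigensystem, which also fixes $A^2=\lambda_3/(\lambda_3-\lambda_1)$, $B^2=\lambda_3/(\lambda_3-\lambda_2)$ and, via the triple product $(\alpha',\alpha'',\alpha''')=c_1$, the constancy of $w'/\alpha_3'$ needed both for $w'^2=\kappa_\alpha^2+\lambda_1\lambda_2$ and for the final verification that $\Psi(s,t)=\alpha(s)+\alpha(t)$ is minimal. This conserved operator is the central idea of the proof and is absent from your proposal; supplying it would close both gaps simultaneously.
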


This paper is organized as follows. In Section \ref{se2} we recall some known formulae of the local theory of curves and surfaces in $\r^3$.  In Section \ref{se3} we prove, for the sake of completeness, some known results with alternative proofs. So,   we prove the   result of \cite{Dillen98} and we obtain the helicoid when one generating curve is a circular helix (Theorem \ref{t32}).  We also characterize any minimal translation surface   by the two relations (\ref{eqcc}) between the curvature   and the torsion  of the generating curves. In Section \ref{se4}, we show the main results of this paper. Here it will be essential the definition  of a set of  self-adjoint linear operators on $\r^3$ associated to each point $\alpha(s)$ and $\beta(t)$, which  it will be proved later that, indeed, they coincide for all $s$ and $t$.  The two   results of this section   (Theorems \ref{t44} and \ref{t45}) classify and  describe the construction of all  minimal translation surfaces in $\r^3$.  The section finishes  showing explicit examples of  translation minimal surfaces by the  procedure previously proved (see also Theorem \ref{t1}).

\section{Preliminaries}\label{se2}
For a general reference on curves and surfaces, we refer to \cite{mr}. All the  curves and surfaces considered in this paper will be assumed to be of class $C^\infty$. Let $\alpha(s)$, $s\in I$, and $\beta(t)$, $t\in J$, be two curves in $\r^3$ parameterized by arc length with   oriented Frenet trihedrons $\{\t_\alpha(s),\n_\alpha(s),\b_\alpha(s)\}$, $\{\t_\beta(t),\n_\beta(t),\b_\beta(t)\}$, for every $s\in I$, $t\in J$, respectively. Throughout this paper let   $\kappa_\alpha(s)>0$ and  $\kappa_\beta(t)>0$ denote the curvatures of $\alpha$ and $\beta$ respectively, as well as,  $\tau_\alpha(s)$ and $\tau_\beta(t)$ the torsions. Let    $\{\alpha(s)+\beta(t): s\in I, t\in J\}\subset\r^3$ be the set obtained by the sum of the curves $\alpha$ and $\beta$. Then $S$ is a regular (translation) surface, and  $\Psi(s,t)=\alpha(s)+\beta(t)$ is a parameterization of $S$, if  $\t_\alpha(s)\times\t_\beta(t)\not=0$ for all $(s,t)\in I\times J$, where $\times$ represents the vector product of $\r^3$: throughout the paper, we will make this assumption.    Recall that the parametric curves $t=ct$ are congruent and translations of $
\alpha(s)$. Hence, they have the same curvature and torsion at corresponding points (similarly for the parametric curves $s=ct$).

We calculate the Gauss curvature and the mean curvature of $S$. For notational convenience we omit the dependence on $s$ and $t$ of the function which are implicitly understood. The derivatives of order $1$ of $\Psi$ are $\Psi_s=\t_\alpha$, $\Psi_t=\t_\beta$, with  $\Psi_s\times\Psi_t\not=0$. Let $\phi(s,t)$, $0<\phi(s,t)<\pi$, be the angle that $\t_\alpha(s)$ makes with $\t_\beta(t)$ at point $\Psi(z,t)$, that is,
$$\cos\phi(s,t)=\langle\t_\alpha(s),\t_\beta(t)\rangle,$$
where $\langle,\rangle$ stands for the usual scalar product of $\r^3$. Then the  coefficients of the first fundamental form in the basis $\{\Psi_s,\Psi_t\}$ are
$$E=1,\ F=\cos\phi,\ G=1,$$
and  the unit normal vector $N(s,t)$  at $\Psi(s,t)$ is
$$N(s,t)=\frac{\t_\alpha(s)\times\t_\beta(t)}{\sin\phi(s,t)}.$$
The derivatives of $\Psi$ of order $2$ are $\Psi_{ss}=\kappa_\alpha\n_\alpha$, $\Psi_{st}=0$ and $\Psi_{tt}=\kappa_\beta\n_\beta$, hence the coefficients of the second fundamental form are
$$l=-\frac{\kappa_\alpha}{\sin\phi}\langle\b_\alpha,\t_\beta\rangle,\ m=0,\ n=\frac{\kappa_\beta}{\sin\phi}\langle\t_\alpha,\b_\beta\rangle.$$
The Gaussian curvature $K$ and the mean curvature $H$ of $S$ are
\begin{equation}\label{eqk}
K=-\frac{\kappa_\alpha\kappa_\beta}{\sin^4\phi}\langle\b_\alpha,\t_\beta\rangle\langle\t_\alpha,\b_\beta\rangle,\quad H=\frac{-\kappa_\alpha\langle\b_\alpha,\t_\beta\rangle+\kappa_\beta\langle t_\alpha,\b_\beta\rangle}{2\sin^3\phi}.
\end{equation}
Consequently,   $S$ is a minimal surface ($H=0$) if and only if
\begin{equation}\label{eq1}
\kappa_\alpha\langle\b_\alpha,\t_\beta\rangle=\kappa_\beta\langle\t_\alpha,\b_\beta\rangle\quad \mbox{for all $s\in I$, $t\in J.$}
\end{equation}

\begin{remark} If the generating curves $\alpha$ and  $\beta$ are not parameterized by arc length, then the minimality condition $H=0$  is equivalent to
\begin{equation}\label{eq2}
|\beta'(t)|^2\langle\alpha'(s)\times\alpha''(s),\beta'(t)\rangle=|\alpha'(s)|^2\langle\alpha'(s),\beta'(t)\times\beta''(t)\rangle
\end{equation}
for all $s\in I$, $t\in J$.
\end{remark}

The following curve will be useful later,
\begin{equation}\label{eq22}
\alpha(u)=\left(u,0,-\frac{1}{c}\log(\cos (cu))\right),\ u\in\left(\frac{-\pi}{2c},\frac{\pi}{2c}\right),
\end{equation}
where $c$ is a positive constant, which is nothing more  than the generating curve of the Scherk surface $\mathcal{S}_{\pi/2}$. Its curvature, with parameter the arc length $s$ and origin $u=0$, is
\begin{equation}\label{eq233}
\kappa_\alpha(s)=\frac{2c e^{cs}}{1+e^{2cs}}.
\end{equation}

\section{Auxiliary results}\label{se3}

In this section we    characterize any minimal translation surface   by the two relations   between the curvature   and the torsion  of the generating curves. 
 and  we prove  some known results with alternative proofs. Let $S\subset\r^3$ be a minimal translation surface with parameterization $\Psi(s,t)=\alpha(s)+\beta(t)$ where the generating curves $\alpha$ and $\beta$ are parameterized by arc length $s$ and $t$.  Having in mind the Frenet equations, we take the derivative with respect to $s$ of (\ref{eq1}) and then dividing by $\kappa_\alpha$, we arrive at
\begin{equation}\label{eq4}
\langle-\tau_\alpha\n_\alpha+\frac{\kappa_\alpha'}{\kappa_\alpha}\b_\alpha,\t_\beta\rangle=\langle\n_\alpha,\kappa_\beta\b_\beta\rangle.
\end{equation}
Differentiating (\ref{eq4}) with respect to   $s$  again and taking into account (\ref{eq1}) and (\ref{eq4}), we obtain
\begin{equation}\label{eq5}
\langle \kappa_\alpha\tau_\alpha\t_\alpha-\left(\frac{\kappa_\alpha'}{\kappa_\alpha}\tau_\alpha+\tau_\alpha'\right)\n_\alpha+\left(\left(\frac{\kappa_\alpha'}{\kappa_\alpha}\right)'+\kappa_\alpha^2-\tau_\alpha^2\right)\b_\alpha,\t_\beta\rangle=
\tau_\alpha\langle\b_\alpha,\kappa_\beta\b_\beta\rangle.
\end{equation}

In the same way, for the curve $\beta$ we have
\begin{equation}\label{eq7}
\tau_\beta\langle\kappa_\alpha\b_\alpha,\b_\beta\rangle=\langle\t_\alpha,\kappa_\beta\tau_\beta\t_\beta-
\left(\frac{\kappa_\beta'}{\kappa_\beta}\tau_\beta+\tau_\beta'\right)\n_\beta+\left(\left(\frac{\kappa_\beta'}{\kappa_\beta}\right)'+\kappa_\beta^2-\tau_\beta^2\right)\b_\beta\rangle.
\end{equation}
Once obtained the above formulae, and for the completeness of this work, we insert in this section the result proved in \cite{Dillen98} with  an alternative proof.

\begin{proposition}\label{pr-dillen} Let $S$ be a non planar minimal translation surface. Assume   that  one, say $\alpha$, of the generating curves is a plane curve. Then:
\begin{enumerate}
\item The curvature $\kappa_\alpha$ of $\alpha$ satisfies the autonomous ODE
\begin{equation}\label{eqyy}
\left(\frac{y'}{y}\right)'+y^2=0.
\end{equation}
\item The curve $\alpha$ is a rigid motion of the curve (\ref{eq22}).
\item The other generating curve $\beta$ is also a plane curve and  $S$ is a surface of Scherk type.
\end{enumerate}
\end{proposition}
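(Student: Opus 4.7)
My approach is to specialize the differentiated identities (\ref{eq4})--(\ref{eq5}) to $\tau_\alpha\equiv 0$ and extract structural information step by step. For item (1), I substitute $\tau_\alpha\equiv 0$ into (\ref{eq5}); the tangential and normal contributions vanish and only the binormal component survives, giving
\[
\left[\left(\frac{\kappa_\alpha'}{\kappa_\alpha}\right)'+\kappa_\alpha^2\right]\langle\b_\alpha,\t_\beta\rangle=0.
\]
Since $\alpha$ is planar, $\b_\alpha$ is a fixed unit vector, so $\langle\b_\alpha,\t_\beta(t)\rangle$ depends only on $t$. If it were identically zero, $\t_\beta$ would always lie in the plane orthogonal to $\b_\alpha$, so $\beta$ would lie in a plane parallel to that of $\alpha$, and $\alpha(s)+\beta(t)$ would lie in an affine plane, contradicting the non-planarity of $S$. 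Hence the bracket vanishes identically, which is (\ref{eqyy}).

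For item (2) I integrate (\ref{eqyy}). The substitution $u=\log\kappa_\alpha$ turns the equation into $u''+e^{2u}=0$, and multiplying by $u'$ produces the first integral $(\kappa_\alpha')^2=\kappa_\alpha^2(c^2-\kappa_\alpha^2)$ for some $c>0$. A quadrature yields $\kappa_\alpha(s)=c\operatorname{sech}(c(s-s_0))$, which matches the curvature (\ref{eq233}) of the Scherk generator (\ref{eq22}); since a plane curve is determined up to rigid motion by its arc length curvature, $\alpha$ is a rigid motion of (\ref{eq22}).

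The hard part is item (3): forcing $\tau_\beta$ to vanish. After the rigid motion of the previous step, I may place $\alpha$ in the $xy$-plane with $\b_\alpha=\mathbf{e}_3$ and $\t_\alpha(s)=\cos\phi(s)\,\mathbf{e}_1+\sin\phi(s)\,\mathbf{e}_2$, where $\phi'=\kappa_\alpha$. Using $\mathrm{d}\kappa_\alpha/\mathrm{d}\phi=\kappa_\alpha'/\kappa_\alpha$ in the first integral above gives $(\mathrm{d}\kappa_\alpha/\mathrm{d}\phi)^2+\kappa_\alpha^2=c^2$, so $\kappa_\alpha=c\sin(\phi-\phi_0)$; one further rotation in the $xy$-plane lets me take $\phi_0=0$, giving $\kappa_\alpha=c\sin\phi$. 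Substituting these expressions into (\ref{eq1}) and using the linear independence of $\cos\phi(s)$ and $\sin\phi(s)$ (since $\phi$ is strictly monotonic), the $\cos\phi$ coefficient forces $\kappa_\beta(t)\,\langle\b_\beta(t),\mathbf{e}_1\rangle=0$, so $\b_\beta(t)$ lies in the fixed plane $\Pi=\mathbf{e}_1^{\perp}$ for every $t$. If $\tau_\beta$ were nonzero on some open interval, then $\n_\beta=-\b_\beta'/\tau_\beta$ would also lie in $\Pi$, and $\t_\beta=\n_\beta\times\b_\beta$ would coincide (up to sign) with the constant unit normal $\mathbf{e}_1$ to $\Pi$; but a constant tangent gives $\kappa_\beta=|\t_\beta'|=0$, contradicting $\kappa_\beta>0$. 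Therefore $\tau_\beta\equiv 0$, so $\beta$ is planar, and the conclusion that $S$ belongs to the Scherk family $\mathcal{S}_\theta$ follows from the classical treatment of minimal translation surfaces with both generators planar outlined in the introduction.
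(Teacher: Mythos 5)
Your items (1) and (2) follow essentially the paper's own route: the same specialization of (\ref{eq5}) to $\tau_\alpha\equiv 0$, and the same integration of (\ref{eqyy}) (your $c\,\mathrm{sech}(c(s-s_0))$ is exactly (\ref{eq233}) in disguise). Your non-degeneracy argument for item (1) --- that $\langle\b_\alpha,\t_\beta\rangle\equiv 0$ would force $\beta$ into a plane parallel to that of $\alpha$ and hence $S$ into a plane --- is a clean substitute for the paper's ``$K=H=0$ implies $S$ is a plane'' argument, and it is sound because a product of a function of $s$ with a function of $t$ that vanishes identically forces one of the two factors to vanish identically. For the first half of item (3) your route is genuinely different from the paper's and arguably tidier: the paper writes $\beta$ in coordinates, splits (\ref{eq2}) by linear independence of $\cos(cu)$, $\sin(cu)$, and computes the mixed product $(\beta',\beta'',\beta''')$ to be zero, whereas you split the Frenet form (\ref{eq1}) by linear independence of $\cos\phi(s)$, $\sin\phi(s)$ to get $\b_\beta(t)\perp\mathbf{e}_1$ for all $t$, and then observe that on any interval where $\tau_\beta\neq 0$ the vectors $\n_\beta,\b_\beta$ would span $\mathbf{e}_1^{\perp}$, forcing $\t_\beta=\pm\mathbf{e}_1$ and $\kappa_\beta=0$. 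That argument is correct and shorter.

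The gap is your last sentence. Item (3) asserts that $S$ is a surface of Scherk type, i.e.\ that $S$ is, up to a rigid motion, one of the surfaces $\mathcal{S}_\theta$, and this does not follow from anything ``outlined in the introduction'': the introduction only solves the case of generating curves in \emph{orthogonal} planes (reducing to $z=f(x)+g(y)$) and merely exhibits the family $\mathcal{S}_\theta$; the general planar case is precisely the theorem of Dillen et al.\ that this proposition is re-proving, so it cannot be invoked. After $\tau_\beta\equiv 0$ is established there is still real work: apply item (2) to $\beta$ to write $\beta=A\sigma$ with $\sigma$ a Scherk generator of some parameter $d>0$ and $A$ orthogonal, then use the minimality condition once more to pin down $A$ --- the paper derives $a_{13}=a_{23}=a_{31}=a_{32}=0$, $a_{33}=-1$ and, crucially, $c=d$, which is what identifies $S$ with $\mathcal{S}_\theta$. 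In your normalization the analogous missing steps are: $\b_\beta$ is a constant vector orthogonal to $\mathbf{e}_1$, so both osculating planes contain $\mathbf{e}_1$, and the second relation $c\langle\mathbf{e}_3,\t_\beta\rangle=\kappa_\beta\langle\mathbf{e}_2,\b_\beta\rangle$ coming from the $\sin\phi(s)$ coefficient is what forces the two Scherk constants to coincide. Until that is carried out, item (3) is only half proved.
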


\begin{proof}
\begin{enumerate}
\item From (\ref{eq5}), because of $\tau_\alpha=0$, we have
$$\left(\left(\frac{\kappa_\alpha'}{\kappa_\alpha}\right)'+\kappa_\alpha^2\right)\langle \b_\alpha,\t_\beta\rangle=0$$
 for all $s$ and $t$. If $\langle\b_\alpha,\t_\beta\rangle=0$ on an open set, then from (\ref{eqk}) we have $K=0$. Since $H=K=0$, then  $S$ is a plane, a contradiction. So, we must have
$$\left(\frac{\kappa_\alpha'}{\kappa_\alpha}\right)'+\kappa_\alpha^2=0.$$
\item The general solution of the autonomous ODE  (\ref{eqyy}) is
$$\kappa_\alpha(t)=\frac{2c e^{\pm (ct+c_1)}}{1+ e^{\pm2(ct+c_1)}},$$
where $c>0$, $c_1$ are constants. After the change $s=\pm (ct+c_1)$, we see that the curvature of $\alpha$ is the same than (\ref{eq233}). From   the fundamental theorem of plane curves, the curve $\alpha$ is a rigid motion of the curve (\ref{eq22}). 
\item After a rigid motion, we may suppose that $\alpha$ is as in (\ref{eq22}). Let   $\beta(v)=(\beta_1(v),\beta_2(v),\beta_3(v))$ denote the  other generating curve   parameterized by arc length $v$. Then  the minimality condition (\ref{eq2}) gives
$$(\beta_1'\beta_2''-\beta_1''\beta_2')\sin(cu)+(c\beta_2'+\beta_2'\beta_3''-\beta_2''\beta_3')\cos(cu)=0.$$
Since the functions $\sin(cu)$ and $\cos(cu)$ are linearly independent, we deduce
\begin{equation}\label{eqbb}
\beta_1'\beta_2''-\beta_1''\beta_2'=0,\quad c\beta_2'+\beta_2'\beta_3''-\beta_2''\beta_3'=0.
\end{equation}
Combining both equations, we have $\beta_2'(\beta_1''\beta_3'-c\beta_1'-\beta_1'\beta_3'')=0$. If $\beta_2$ is a constant function, then $\beta$ is a plane curve. On the contrary, $\beta_1''\beta_3'=c\beta_1'+\beta_1'\beta_3''$. From the first equation in (\ref{eqbb}), we obtain $\beta_1'\beta_2'''=\beta_1'''\beta_2'$. Then   the mixed product $(\beta',\beta'',\beta''')$ is
$$(\beta',\beta'',\beta''')=-c\beta_1'''\beta_2'-\beta_2'''(\beta_1'\beta_3''-\beta_1''\beta_3')=\beta_2'''(c\beta_1'+\beta_1'\beta_3''-\beta_1''\beta_3')=0.$$
This implies that $\tau_\beta=0$ and  $\beta$  is planar. Now, according to the item 2  of the proposition,   $\beta$ is, up to a rigid motion, the curve parameterized in (\ref{eq22}). Set  $\beta(v)=A\sigma(v)$, where $A$ is an orthogonal matrix, $\sigma(v)=(v,0,-\frac{1}{d}\log\cos(dv))$ and $d>0$ is a positive constant. Applying the minimality condition (\ref{eq2}) again, we have
$$c\left(a_{21}+a_{23}\frac{\sin (dv)}{\cos(dv)}\right)=d \left(a_{23}a_{31}-a_{21}a_{33}+(a_{13} a_{21}-a_{11}a_{23})\frac{\sin(cu)}{\cos(cu)}\right).$$
Due to the linear independence, in the first step, of $\cos(cu)$, $\sin(cu)$, and then of $\cos(dv)$, $\sin(dv)$, we deduce $a_{23}=0$ and
$$a_{13}a_{21}=0\quad ca_{21}+da_{21}a_{33}=0.$$
In case $a_{21}=0$, and using that $A$ is orthogonal, it follows that  $a_{22}=\pm 1$ and it is not difficult to see that $\beta$ is a curve contained in the $xz$-plane, the same that $\alpha$: this implies that $S$ is a plane, a contradiction. Thus $a_{21}\not=0$ and $a_{13}=0$ and $c=-d a_{33}$. Using that $A$ is orthogonal, then $a_{31}=a_{32}=0$ and $a_{33}=\pm 1$. In particular, and because $c$ and $d$ are positive,  we find  $a_{33}=-1$. Definitively, we have two possibilities for the matrix $A$, namely,
$$ \left(\begin{array}{lll}\cos\theta&\sin\theta&0\\ \sin\theta&-\cos\theta&0\\ 0&0& -1\end{array}\right)\mbox{ and }  \left(\begin{array}{lll}\cos\theta&-\sin\theta&0\\ \sin\theta&\cos\theta&0\\ 0&0& -1\end{array}\right).$$
In both cases, the parameterization $\Psi(s,t)$ is
$$\Psi(u,v)=\alpha(u)+A\sigma(v)=\left(u+v\cos\theta,v\sin\theta,\frac{1}{c}\log\frac{\cos(cv)}{\cos(cu)}\right),$$
and $S$ is the surface $\mathcal{S}_\theta$ belonging to the Scherk family.
\end{enumerate}
\end{proof}

Recall that the helicoid is a minimal translation surface obtained  as the sum of a circular helix with itself. We prove that this is consequence of the following result (see also \cite[Cor. 3.4]{lp}).

\begin{theorem}\label{t32} Let $S$ be a minimal translation surface. If one of the generating curves is a circular helix, then the other curve is a congruent circular helix and $S$ is the helicoid.
\end{theorem}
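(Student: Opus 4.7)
The key observation is that when $\alpha$ is a circular helix with constant curvature $\kappa_\alpha=a$ and constant torsion $\tau_\alpha=b\neq 0$, its Frenet frame rotates uniformly about a fixed axis with frequency $\omega=\sqrt{a^2+b^2}$; consequently every $s$-dependent quantity appearing in the minimality condition (\ref{eq1}) is a linear combination of the three functions $1$, $\cos(\omega s)$, $\sin(\omega s)$. The plan is to bring $\alpha$ to a canonical form by a rigid motion, then match Fourier coefficients in $s$ in the minimality equation to extract the structure of $\beta$.

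Concretely, I would first apply a rigid motion (which only alters the surface by a rigid motion and replaces $\beta$ by a rotated curve) so that $\alpha(s)=(R\cos(\omega s),R\sin(\omega s),hs)$ with $R\omega^2=a$, $h\omega=b$, $R^2\omega^2+h^2=1$. Computing $\alpha'\times\alpha''=(hR\omega^2\sin(\omega s),-hR\omega^2\cos(\omega s),R^2\omega^3)$ and substituting together with $\beta(t)=(\beta_1(t),\beta_2(t),\beta_3(t))$ into the minimality equation (\ref{eq2}) turns it into an identity of the form $a_0(t)+a_1(t)\cos(\omega s)+a_2(t)\sin(\omega s)\equiv 0$. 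Setting each Fourier coefficient equal to zero (and using $|\beta'|=1$ together with the orthonormality of the Frenet frame of $\beta$) forces $\beta_3'$ to be a constant equal to $\pm h$, while the oscillating equations constrain the planar projection $(\beta_1,\beta_2)$ so that $\beta_1'^2+\beta_2'^2=R^2\omega^2$ and $\beta_1''^2+\beta_2''^2=a^2$; hence $(\beta_1,\beta_2)$ traces a circle of radius $R$ at constant angular speed $\omega$. This already yields $\kappa_\beta\equiv a$ and $|\tau_\beta|\equiv|b|$, so by the fundamental theorem of space curves $\beta$ is a circular helix congruent to $\alpha$.

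To conclude that the surface is the helicoid, one substitutes the explicit form of $\beta$ (a circular helix with the same parameters as $\alpha$ up to translation and phase) into $\Psi(s,t)=\alpha(s)+\beta(t)$ and applies the change of variables $u=(s+t)/2$, $v=(s-t)/2$; the product-to-sum trigonometric identities then convert $\Psi$ into the standard helicoid parametrization, exactly as in the introduction for the case $\alpha(s)=(\cos s,\sin s,s)/2$. The main technical obstacle is the sign and phase bookkeeping: the three Fourier conditions leave some ambiguity (for instance between $\beta_3'=+h$ and $\beta_3'=-h$, and between the two possible rotation senses of $(\beta_1,\beta_2)$), so the full minimality identity must be used to eliminate the reflected cases and force $\beta$ to be congruent, not merely mirror-congruent, to $\alpha$.
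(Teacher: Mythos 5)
Your proposal is correct and follows essentially the same route as the paper: normalize the helix $\alpha$, substitute into the minimality condition, and use the linear independence of $1$, $\cos(\omega s)$, $\sin(\omega s)$ to split it into three equations that force $\beta_3'$ to be constant and $(\beta_1',\beta_2')$ to rotate uniformly, whence $\beta$ is a congruent helix and the surface is the helicoid. The sign/phase ambiguity you flag is handled in the paper by the same kind of normalization (choosing $\beta_3'=b/\sqrt{a^2+b^2}$ so the tangent indicatrices lie in the same hemisphere).
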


\begin{proof} Assume that the generating curve $\alpha$ of $S$ is the circular helix
$$\alpha(s)=\left(a\cos\varphi(s),a\sin\varphi(s),b\varphi(s)\right),$$
where $\varphi(s)=s/\sqrt{a^2+b^2}$, $a>0$, $b\not=0$ are two constants.  Then
$\kappa_\alpha=a/(a^2+b^2)$, $\tau_\alpha=b/(a^2+b^2)$
and
$$\t_\alpha(s)=\frac{1}{\sqrt{a^2+b^2}}\left(-a\sin\varphi(s),a\cos\varphi(s),b\right)$$
$$\b_\alpha(s)=\frac{1}{\sqrt{a^2+b^2}}\left(b\sin\varphi(s),-b\cos\varphi(s),a\right).$$
If $\beta(t)=(\beta_1(t),\beta_2(t),\beta_3(t))$ is the other generating curve parameterized by arc length $t$, then $\kappa_\beta=|\beta''|=|\beta'\times\beta''|$, $\t_\beta=(\beta_1',\beta_2',\beta_3')$ and
$$\b_\beta=\frac{1}{|\beta'\times\beta''|}
((\beta'\times\beta'')_1,(\beta'\times\beta'')_2,(\beta'\times\beta'')_3).$$
Applying the minimality condition (\ref{eq1}), it follows
or equivalently,
\begin{eqnarray*}&&\frac{a}{a^2+b^2}\left(\beta_1'b\sin \varphi(s)-\beta_2'b\cos\varphi(s)+a\beta_3'\right)\\
&&=\
\left(-a(\beta'\times\beta'')_1\sin\varphi(s)+a(\beta'\times\beta'')_2\cos\varphi(s)+b(\beta'\times\beta'')_3\right).
\end{eqnarray*}
Since the functions $\{1, \cos\varphi(s),\sin\varphi(s)\}$ are linearly independent, we derive
\begin{equation}\label{bbb}
\begin{split}
\frac{b}{a^2+b^2}\beta_1'&=-(\beta'\times\beta'')_1=-\beta_2'\beta_3''+\beta_2''\beta_3'\\
\frac{b}{a^2+b^2}\beta_2'&=-(\beta'\times\beta'')_2=-\beta_3'\beta_1''+\beta_3''\beta_1'\\
\frac{a^2}{a^2+b^2}\beta_3'&=b(\beta'\times\beta'')_3=b(\beta_1'\beta_2''-\beta_1''\beta_2').
\end{split}
\end{equation}
Multiplying the first and second of (\ref{bbb}) by $\beta_1'$ and $\beta_2'$ respectively, we deduce
$$\frac{b}{a^2+b^2}(\beta_1'^2+\beta_2'^2)=\beta_3'(\beta_1'\beta_2''-\beta_1''\beta_2')=\frac{a^2 \beta_3'^2}{b(a^2+b^2)},$$
where in the last identity we have used the third of (\ref{bbb}).  Since $\beta_1'^2+\beta_2'^2=1-\beta_3'^2$, then  $\beta_3'^2=b^2/(a^2+b^2)$, hence 
$$\beta_1'^2+\beta_2'^2=\frac{a^2}{a^2+b^2}.$$
Without loss of generality, we take $\beta_3'=b/\sqrt{a^2+b^2}$. Then  the indicatrix of tangents of $\alpha$ and $\beta$  will be lying to the same hemisphere. Then we must have
$$\beta_1'(t)=-\frac{a}{\sqrt{a^2+b^2}}\sin\varphi(t),\ \beta_2'=\frac{a}{\sqrt{a^2+b^2}}\cos\varphi(t)$$
and thus, up to a translation,  
$\beta(t)=\left(a\cos\varphi(t),a\sin\varphi(t),b\varphi(t)\right)$ and $\beta$  coincides with   $\alpha$.
\end{proof}
As a consequence of Proposition \ref{pr-dillen}, from now on we may suppose that the generating curves $\alpha$ and $\beta$ are non planar. We need to introduce the following notation. For a non plane curve parameterized by arc length  with curvature $\kappa$ and torsion $\tau$, we set
\begin{equation}\label{eqR}
R=\frac{\kappa'}{\kappa}+\frac{\tau'}{\tau},\quad \Sigma=\left(\frac{\kappa'}{\kappa}\right)'+\kappa^2-\tau^2.
\end{equation}
The subscript $\alpha$ or $\beta$ in $R$ and $\Sigma$ indicates that we are working in the corresponding curve $\alpha$ or $\beta$.

We now have the following key result.

\begin{proposition}\label{pr34}
 If $\Psi(s,t)=\alpha(s)+\beta(t)$ is a minimal translation surface, then
\begin{equation}\label{eq88}
\kappa_\alpha^2\tau_\alpha=c_1\not=0,\ \kappa_\beta^2\tau_\beta=\bar{c}_1\not=0,\ \frac{\Sigma_\alpha}{\tau_\alpha}-\tau_\alpha=c_2,\ \frac{\Sigma_\beta}{\tau_\beta}-\tau_\beta=\bar{c}_2,
\end{equation}
 where $c_1$, $c_2$, $\bar{c}_1$ and $\bar{c}_2$ are constants.
 \end{proposition}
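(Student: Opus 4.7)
My plan is to continue the differentiation chain already begun, adding one further derivative. Under the standing assumption (justified by Proposition \ref{pr-dillen}) that neither generating curve is planar, $\tau_\alpha$ and $\tau_\beta$ are nowhere zero, so I may divide (\ref{eq5}) by $\tau_\alpha$ and rewrite it as
\begin{equation*}
\kappa_\alpha\langle\t_\alpha,\t_\beta\rangle - R_\alpha\langle\n_\alpha,\t_\beta\rangle + \frac{\Sigma_\alpha}{\tau_\alpha}\langle\b_\alpha,\t_\beta\rangle = \kappa_\beta\langle\b_\alpha,\b_\beta\rangle.
\end{equation*}
The next step is to differentiate this identity once more in $s$. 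On the left, every derivative is computed from the Frenet formulas for $\alpha$ alone (e.g.\ $\partial_s\langle\n_\alpha,\t_\beta\rangle = -\kappa_\alpha\langle\t_\alpha,\t_\beta\rangle + \tau_\alpha\langle\b_\alpha,\t_\beta\rangle$). On the right, the derivative produces $-\kappa_\beta\tau_\alpha\langle\n_\alpha,\b_\beta\rangle$, and this single mixed term is exactly what (\ref{eq4}) allows me to eliminate.

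After these substitutions the identity takes the form
\begin{equation*}
a(s)\langle\t_\alpha,\t_\beta\rangle + b(s)\langle\n_\alpha,\t_\beta\rangle + c(s)\langle\b_\alpha,\t_\beta\rangle = 0 \quad \text{for all } (s,t),
\end{equation*}
where $a = \kappa_\alpha' + R_\alpha\kappa_\alpha$, $b = \kappa_\alpha^2 - R_\alpha' - \Sigma_\alpha - \tau_\alpha^2$ and $c = (\Sigma_\alpha/\tau_\alpha)' - R_\alpha\tau_\alpha + \tau_\alpha\kappa_\alpha'/\kappa_\alpha$ depend on $s$ only.

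The key observation is that the left-hand side equals $\langle w(s),\t_\beta(t)\rangle$ where $w(s) = a(s)\t_\alpha(s) + b(s)\n_\alpha(s) + c(s)\b_\alpha(s)$. If $w(s_0)\neq 0$ for some $s_0$, then $\t_\beta(t)\perp w(s_0)$ for every $t$, so $\langle w(s_0),\beta(t)\rangle$ is constant in $t$ and $\beta$ is confined to an affine plane, contradicting the non-planarity of $\beta$. Hence $a\equiv b\equiv c\equiv 0$.

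Unpacking, $a=0$ reads $2\kappa_\alpha'/\kappa_\alpha + \tau_\alpha'/\tau_\alpha = 0$, that is $(\kappa_\alpha^2\tau_\alpha)'=0$, so $\kappa_\alpha^2\tau_\alpha = c_1$ is constant and $c_1\neq 0$ since $\tau_\alpha\not\equiv 0$. From $c=0$ the $\kappa_\alpha'$-terms cancel via the definition of $R_\alpha$, leaving $(\Sigma_\alpha/\tau_\alpha)' = \tau_\alpha'$, i.e.\ $\Sigma_\alpha/\tau_\alpha - \tau_\alpha = c_2$. The equation $b=0$ is a consistency check: differentiating $a=0$ gives $2(\kappa_\alpha'/\kappa_\alpha)' + (\tau_\alpha'/\tau_\alpha)' = 0$, and replacing $(\kappa_\alpha'/\kappa_\alpha)'$ by $\Sigma_\alpha - \kappa_\alpha^2 + \tau_\alpha^2$ matches $b$ identically. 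The claims for $\beta$ follow by the symmetric argument starting from (\ref{eq7}) and differentiating in $t$. The main technical hurdle, as far as I can see, is purely the bookkeeping in the extra $s$-differentiation and the subsequent substitution via (\ref{eq4}), so that exactly these three coefficients appear and $b$ is then seen to vanish for free; conceptually everything rests on the single geometric remark that a non-planar curve cannot have its tangent indicatrix inside a $2$-plane through the origin.
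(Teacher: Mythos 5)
Your proof is correct and follows the paper's argument essentially verbatim: the same two $s$-differentiations of the minimality condition, the same elimination of the mixed term via (\ref{eq4}), and the same vector $u=a\,\t_\alpha+b\,\n_\alpha+c\,\b_\alpha$ whose vanishing yields the two first integrals (with $b=0$ redundant given $a=0$, as you note). The only, equally valid, deviation is in concluding $u=0$ from $\langle u,\t_\beta\rangle\equiv 0$: you argue that a nonzero $u(s_0)$ would force $\beta$ into a plane, whereas the paper differentiates twice more in $t$ (using $\kappa_\beta>0$ and $\tau_\beta\neq 0$) to obtain $\langle u,\n_\beta\rangle=\langle u,\b_\beta\rangle=0$.
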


 \begin{proof}
 Dividing (\ref{eq5}) by $\tau_\alpha$, we have
$$
 \langle\kappa_\alpha\t_\alpha-R_\alpha\n_\alpha+\frac{\Sigma_\alpha}{\tau_\alpha}\b_\alpha,\t_\beta\rangle=\langle\b_\alpha,\kappa_\beta\b_\beta\rangle.
$$
By differentiation this equation  with respect to $s$ and taking into account the Frenet equations and (\ref{eq4}), we arrive at
 \begin{equation}\label{eq10}
 \langle u,\t_\beta\rangle=0,
 \end{equation}
 where we have set
 \begin{equation}\label{eq11}
 u=(\kappa_\alpha R_\alpha+\kappa'_\alpha)\t_\alpha+(\kappa_\alpha^2-\tau_\alpha^2-\Sigma_\alpha-R_\alpha')\n_\alpha+\left(\left(\frac{\Sigma_\alpha}{\tau_\alpha}\right)'-\tau_\alpha R_\alpha+\frac{\tau_\alpha}{\kappa_\alpha}\kappa'_\alpha\right)\b_\alpha.
\end{equation}
Differentiating  (\ref{eq10}) with respect to  $t$ and because $\kappa_\beta>0$, we have
\begin{equation}\label{eq16}
\langle u,\n_\beta\rangle=0.
\end{equation}
Finally, we differentiate again (\ref{eq16}) with respect to $t$ and, taking into account (\ref{eq10}) and $\tau_\beta\not=0$, we have
\begin{equation}\label{eq17}
\langle u,\b_\beta\rangle=0.
\end{equation}
From equations (\ref{eq10}), (\ref{eq16}) and (\ref{eq17}), we find $u=0$. Hence we deduce from (\ref{eq11})
\begin{equation} \label{eq18}
\left.
\begin{split}
\kappa_\alpha R_\alpha+\kappa'_\alpha&=0\\
\Sigma_\alpha+R_\alpha'-\kappa_\alpha^2+\tau_\alpha^2&=0\\
\left(\frac{\Sigma_\alpha}{\tau_\alpha}\right)'-\tau_\alpha R_\alpha+\frac{\kappa'_\alpha}{\kappa_\alpha}\tau_\alpha&=0.
\end{split}\right\}
\end{equation}
The first of (\ref{eq18}) implies
\begin{equation}\label{eq19}
R_\alpha=-\frac{\kappa'_\alpha}{\kappa_\alpha}.
\end{equation}
By the definition of $R_\alpha$ in (\ref{eqR}), we derive  $2\kappa_\alpha'\tau_\alpha+\kappa_\alpha\tau_\alpha=0$. Then there exists a constant  $c_1\not=0$ such that $\kappa_\alpha^2\tau_\alpha=c_1$.  By the definition of $\Sigma_\alpha$ in (\ref{eqR}) , the second relation of (\ref{eq18}) is $(\kappa_\alpha'/\kappa_\alpha)'+R_\alpha'=0$, which is valid because of (\ref{eq19}). The third of (\ref{eq18}) and the definition of $R_\alpha$ give $(\Sigma_\alpha/\tau_\alpha)'-\tau_\alpha'=0$, hence
$$\frac{\Sigma_\alpha}{\tau_\alpha}-\tau_\alpha=c_2
$$for some constant $c_2$. In a similar way, we deduce the corresponding results for the curve $\beta$ by using (\ref{eq7}).
\end{proof}

\begin{remark} In conclusion, with successive differentiations of (\ref{eq1}) with respect to $s$, $ss$, $t$, $tt$, $ts$, $sst$, $tts$ and $ttss$ we, respectively, find

\begin{equation}\label{eq23}
\left.
\begin{split}
\langle-\tau_\alpha\n_\alpha-R_\alpha\b_\alpha,\t_\beta\rangle&=\langle\n_\alpha,\kappa_\beta\t_\beta\rangle\\
 \langle\kappa_\alpha\t_\alpha-R_\alpha\n_\alpha+\frac{\Sigma_\alpha}{\tau_\alpha}\b_\alpha,\t_\beta\rangle&=\langle\b_\alpha,\kappa_\beta\b_\beta\rangle\\
 \langle\kappa_\alpha\b_\alpha,\n_\beta\rangle&=\langle\t_\alpha,-\tau_\beta\n_\beta-R_\beta\b_\beta\rangle\\
 \langle\kappa_\alpha\b_\alpha,\b_\beta\rangle&=\langle\t_\alpha,\kappa_\beta\t_\beta-R_\beta\n_\beta+\frac{\Sigma_\beta}{\tau_\beta}\b_\beta\rangle\\
 \langle-\tau_\alpha\n_\alpha-R_\alpha\b_\alpha,\n_\beta\rangle&=\langle\n_\alpha,-\tau_\beta\n_\beta-R_\beta\b_\beta\rangle\\
\langle \kappa_\alpha\t_\alpha-R_\alpha\n_\alpha+\frac{\Sigma_\alpha}{\tau_\alpha}\b_\alpha,\n_\beta\rangle&=\langle\b_\alpha,-\tau_\beta\n_\beta-R_\beta\b_\beta\rangle\\
 \langle-\tau_\alpha\n_\alpha-R_\alpha\b_\alpha,\b_\beta\rangle&=\langle\n_\alpha,\kappa_\beta\t_\beta-R_\beta\n_\beta+\frac{\Sigma_\beta}{\tau_\beta}\b_\beta\rangle\\
 \langle\kappa_\alpha\t_\alpha-R_\alpha\n_\alpha+\frac{\Sigma_\alpha}{\tau_\alpha}\b_\alpha,\b_\beta\rangle&=\langle \b_\alpha,\kappa_\beta\t_\beta-R_\beta\n_\beta+\frac{\Sigma_\beta}{\tau_\beta}\b_\beta\rangle.
 \end{split}\right\}
\end{equation}
\end{remark}

Another useful result is the following.

\begin{proposition}\label{pr36}
 Let $\alpha$ be a curve in $\r^3$ parameterized by arc length with curvature $\kappa_\alpha>0$ and torsion $\tau_\alpha\not=0$. If $\sigma_1\not=0$ and $\sigma_2$ are two constants such that
\begin{equation}\label{eq31}
\kappa_\alpha^2\tau_\alpha=\sigma_1\not=0\mbox{ and }\frac{\Sigma_\alpha}{\tau_\alpha}-\tau_\alpha=\sigma_2,
\end{equation}
then    $\kappa_\alpha$ is a positive solution of the autonomous ODE
\begin{equation}\label{eq32}
y'^2+y^4+\sigma_3 y^2+\frac{\sigma_1^2}{y^{2}}+\sigma_1\sigma_2=0
\end{equation}
for some constant $\sigma_3$.

Conversely, let $c_1\not=0$, $c_2$ be two constant. Then for any positive and non-constant solution $\kappa(s)$ of (\ref{eq31}) and choosing $\tau(s)=\sigma_1/\kappa(s)^2$,  a curve $\alpha$ parameterized by the arc length $s$ with curvature and torsion $\kappa$ and $\tau$, respectively,  satisfies
\begin{equation}\label{eq33}
\frac{\Sigma_\alpha}{\tau_\alpha}-\tau_\alpha=\sigma_2,\quad\Sigma_\alpha+R_\alpha^2+\kappa_\alpha^2=-\sigma_3
\end{equation}
for some constant $\sigma_3$.
\end{proposition}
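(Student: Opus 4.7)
\emph{Forward direction.} The assumption $\kappa_\alpha^2\tau_\alpha=\sigma_1$ lets me eliminate the torsion by $\tau_\alpha=\sigma_1/\kappa_\alpha^2$. Substituting this into the second hypothesis $\Sigma_\alpha/\tau_\alpha-\tau_\alpha=\sigma_2$ and clearing denominators turns it into a second-order ODE purely in $\kappa_\alpha$. Setting $y=\kappa_\alpha$ and $v=y'/y$, the definition of $\Sigma_\alpha$ makes the equation take the form
\begin{equation*}
v'+y^{2}-\frac{2\sigma_1^{2}}{y^{4}}=\frac{\sigma_1\sigma_2}{y^{2}}.
\end{equation*}
This is exact once multiplied by the integrating factor $2v=2y'/y$: indeed $2v\,v'=(v^{2})'$, and each of $2vy^{2}$, $2v/y^{4}$, $2v/y^{2}$ is, up to the factor $2(\log y)'$, the derivative of $y^{2}$, $-1/(2y^{4})$, $-1/y^{2}$ respectively. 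One integration then yields
\begin{equation*}
\left(\frac{y'}{y}\right)^{2}+y^{2}+\frac{\sigma_1^{2}}{y^{4}}+\frac{\sigma_1\sigma_2}{y^{2}}=C,
\end{equation*}
and after multiplying by $y^{2}$ and renaming $\sigma_3=-C$ one obtains exactly (\ref{eq32}).

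\emph{Converse.} Given a positive solution $\kappa(s)$ of (\ref{eq32}) and setting $\tau(s)=\sigma_1/\kappa(s)^{2}$, the fundamental theorem of space curves supplies (up to rigid motion) a unique arc-length parameterized curve $\alpha$ with these curvature and torsion. It remains to verify both equations of (\ref{eq33}). For the first, I would differentiate (\ref{eq32}) once with respect to $s$ (and divide by $2y'$, which is permissible for a non-constant solution) to extract a formula for $y''$, then assemble $\Sigma_\alpha=(y'/y)'+y^{2}-\tau^{2}=y''/y-(y'/y)^{2}+y^{2}-\sigma_1^{2}/y^{4}$ by substituting this expression for $y''$ and using (\ref{eq32}) to eliminate $(y')^{2}$; the result collapses to $\Sigma_\alpha=\tau^{2}+\sigma_2\tau$, which is exactly $\Sigma_\alpha/\tau-\tau=\sigma_2$. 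For the second, one checks $R_\alpha=-\kappa'/\kappa$ (since $\tau'/\tau=-2\kappa'/\kappa$ by $\tau=\sigma_1/\kappa^{2}$), so $R_\alpha^{2}=(y')^{2}/y^{2}$, and then $\Sigma_\alpha+R_\alpha^{2}+\kappa_\alpha^{2}$ simplifies, via the ODE (\ref{eq32}) once more, to the constant $-\sigma_3$.

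\emph{Main obstacle.} The only non-trivial point is spotting that the second-order ODE obtained from the hypotheses admits the first integral that matches (\ref{eq32}); once the substitution $v=y'/y$ is in place this is just the right integrating factor. The rest is careful bookkeeping with the Frenet data and (for the converse) using the ODE exactly once to eliminate $(y')^{2}$ and once differentiated to eliminate $y''$.
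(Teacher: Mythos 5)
Your proposal is correct and follows essentially the same route as the paper: both reduce the hypotheses to the second-order ODE $(\kappa'/\kappa)'+\kappa^2-2\sigma_1^2\kappa^{-4}-\sigma_1\sigma_2\kappa^{-2}=0$ and then extract the same first integral (the paper via the substitution $w=\log\kappa$, $z=w'(w)$; you via the equivalent integrating factor $2y'/y$), and the converse is handled identically by differentiating the first integral, dividing by $y'$, and using $R_\alpha=-\kappa'/\kappa$. No gaps.
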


\begin{proof} The second identity of (\ref{eq31}), by taking into account the first one, becomes
$$\left(\frac{\kappa'}{\kappa}\right)'+\kappa^2-\frac{2\sigma_1^2}{\kappa^4}-\frac{\sigma_1 \sigma_2}{\kappa^2}=0.$$
We now obtain a first integral of this equation. Set $w=\log\kappa$, that is, $\kappa=e^w$. Since   $w'=\kappa'/\kappa$, we find
$$w''+e^{2w}-2\sigma_1^2 e^{-4w}-\sigma_1\sigma_2 e^{-2w}=0.$$
In order to solve this ODE, put $z=w'$ and consider $z=z(w)$. Because of $w''=\frac12\frac{d(z^2)}{dw}$, we have
$$\frac{d(z^2)}{dw}+2e^{2w}-4\sigma_1^2e^{-4w}-2\sigma_1\sigma_2 e^{-2w}=0.$$
An integration of this ODE leads to 
$$(w')^2=z^2=-e^{2w}-\sigma_1^2 e^{-4w}-\sigma_1\sigma_2 e^{-2w}-\sigma_3$$
for some constant $\sigma_3$. Since $w=\log\kappa$, 
$$\left(\frac{\kappa'}{\kappa}\right)^2+\kappa^2+\sigma_1^2\kappa^{-4}+\sigma_1\sigma_2\kappa^{-2}+\sigma_3=0,$$
 or equivalently,
$$\kappa'^2+\kappa^4+\sigma_3\kappa^2+\sigma_1^2\kappa^{-2}+\sigma_1\sigma_2=0.$$
This proves (\ref{eq32}).

For the converse of the proposition, let $\kappa(s)$ be a positive and non-constant solution of (\ref{eq32}) and put $\tau(s)=\sigma_1/\kappa^{2}(s)$.  Consider   $\alpha$  a curve parameterized by arc length with curvature $\kappa$ and torsion $\tau$. From (\ref{eq32}), it follows 
\begin{equation}\label{eq34}
\left(\frac{\kappa'_\alpha}{\kappa_\alpha}\right)^2+\kappa_\alpha^2+\sigma_1^2\kappa_\alpha^{-4}+\sigma_1\sigma_2\kappa_\alpha^{-2}+\sigma_3=0.
\end{equation}
Differentiating with respect to $s$ and since $\kappa'_\alpha\not=0$, we obtain
$$\left(\frac{\kappa'_\alpha}{\kappa_\alpha}\right)'+\kappa_\alpha^2-2\sigma_1^2\kappa_\alpha^{-4}-\sigma_1\sigma_2\kappa_\alpha^{-2}=0.$$
Since $\tau_\alpha=\sigma_1/\kappa_\alpha^2$, then 
$$\left(\frac{\kappa'_\alpha}{\kappa_\alpha}\right)'+\kappa_\alpha^2-2\tau_\alpha^2-\sigma_2\tau_\alpha=0.$$
This equation implies the first of (\ref{eq33}). Since $\sigma_1^2\kappa_\alpha^{-4}+\sigma_1\sigma_2\kappa_\alpha^{-2}=\tau_\alpha^{2}+\sigma_2\tau_\alpha=\Sigma_\alpha$, we derive from (\ref{eq34})  the second identity of (\ref{eq33}).
\end{proof}

\begin{remark} With the notation of Proposition \ref{pr34}, the generating curves $\alpha$, $\beta$ of a minimal translation surface $\Psi(s,t)=\alpha(s)+\beta(t)$ satisfy the conditions of Proposition \ref{pr36} with $\sigma_1=c_1$, $\sigma_2=c_2$ and  $\sigma_1=\bar{c}_1$, $\sigma_2=\bar{c}_2$, respectively. So we find
$$
R_\alpha^2+\kappa_\alpha^2+\tau_\alpha^2+c_2\tau_\alpha+c_3=0,\quad
 R_\beta^2+\kappa_\beta^2+\tau_\beta^2+\bar{c}_2\tau_\beta+\bar{c}_3=0,
$$
for some constants $c_3$ and $\bar{c}_3$.
 \end{remark}

 Motivated by equations (\ref{eq1}) and the set of identities (\ref{eq23}), we define the functions $V_i=V_i(s)$, $W_i=W_i(t)$, $1\leq i\leq 3$ by
 \begin{equation}\label{eq37}
 \left.
 \begin{split}
 V_1 &=\kappa_\alpha\b_\alpha\\
V_2 &=-\tau_\alpha\n_\alpha-R_\alpha\b_\alpha\\
V_3 &=\kappa_\alpha\t_\alpha-R_\alpha\n_\alpha+\frac{\Sigma_\alpha}{\tau_\alpha}\b_\alpha\\
\end{split}\right\},\quad  \left.
 \begin{split}
 W_1 &=\kappa_\beta\b_\beta\\
W_2 &=-\tau_\beta\n_\beta-R_\beta\b_\beta\\
W_3 &=\kappa_\beta\t_\beta-R_\beta\n_\beta+\frac{\Sigma_\beta}{\tau_\beta}\b_\beta.\\
\end{split}\right\}
\end{equation}

 It is not difficult to see that  $V_i$ an $W_i$  satisfy the following equations of Frenet type:
$$\left.\begin{array}{lll}
V'_1&=&\kappa_\alpha V_2\\
V'_2&=&-\kappa_\alpha V_1+\tau_\alpha V_3\\
V'_3&=&-\tau_\alpha V_2
\end{array}\right\}\qquad
\left.\begin{array}{lll}
W_1'&=&\kappa_\beta W_2\\
W_2'&=&-\kappa_\beta W_1+\tau_\beta W_3\\
W_3'&=&-\tau_\beta W_2,
\end{array}\right\}
$$
respectively.

Also it is immediate from (\ref{eq37})   that   their mixed products are
$$(V_1,V_2,V_3)=\kappa_\alpha^2\tau_\alpha=c_1,\quad   (W_1,W_2,W_3)=\kappa_\beta^2\tau_\beta=\bar{c}_1.$$
With the above notation,  the identity (\ref{eq1}) and the eight relations (\ref{eq23})  are written, respectively, as
\begin{equation}\label{eq39}
\left.
\begin{split}
&\langle V_1,\t_\beta\rangle=\langle\t_\alpha,W_1\rangle,\\
& \langle V_2,\t_\beta\rangle=\langle\n_\alpha,W_1\rangle\\
&\langle V_3,\t_\beta\rangle=\langle\b_\alpha,W_1\rangle,\\
& \langle V_1,\n_\beta\rangle=\langle\t_\alpha,W_2\rangle\\
&\langle V_1,\b_\beta\rangle=\langle\t_\alpha,W_3\rangle,\\
& \langle V_2,\n_\beta\rangle=\langle\n_\alpha,W_2\rangle\\
&\langle V_3,\n_\beta\rangle=\langle\b_\alpha,W_2\rangle,\\
& \langle V_2,\b_\beta\rangle=\langle\n_\alpha,W_3\rangle\\
&\langle V_3,\b_\beta\rangle=\langle\b_\alpha,W_3\rangle.
\end{split}\right\}
\end{equation}
All above facts and formulas are needed in order to prove the main results in the next section.

\section{Classification and construction results}\label{se4}

Let $S\subset\r^3$ be a minimal translation surface with parameterization $\Psi(s,t)=\alpha(s)+\beta(t)$, where we suppose $\alpha$ and $\beta$  parameterized by arc length. Motivated by the relations (\ref{eq37}), for each point $\alpha(s)$ and $\beta(t)$, we define  a set of  linear transformations $L_{\alpha(s)}, L_{\beta(t)}:\r^3\rightarrow\r^3$ with matrices
$$L_{\alpha(s)}=\left(\begin{array}{lll} 0&0&\kappa_\alpha(s)\\
0&-\tau_\alpha(s)&-R_\alpha(s)\\
\kappa_\alpha(s)&-R_\alpha(s)&\dfrac{\Sigma_\alpha}{\tau_\alpha}(s)
\end{array}
\right), \quad L_{\beta(t)}=\left(\begin{array}{lll} 0&0&\kappa_\beta(t)\\
0&-\tau_\beta(t)&-R_\beta(t)\\
\kappa_\beta(t)&-R_\beta(t)&\dfrac{\Sigma_\beta}{\tau_\beta}(t)
\end{array}
\right)$$
with respect to the basis $\{\t_\alpha(s),\n_\alpha(s),\b_\alpha(s)\}$ and $\{\t_\beta(t),\n_\beta(t),\b_\beta(t)\}$, respectively.

Since the matrix $L_{\alpha(s)}$ is symmetric with respect to an orthonormal basis, the linear map    $L_{\alpha(s)}$ is self-adjoint for all $s$. Its characteristic equation is
$$-\lambda^3+\left(\frac{\Sigma_\alpha}{\tau_\alpha}-\tau_\alpha\right)\lambda^2+(\Sigma_\alpha+R_\alpha^2+\kappa_\alpha^2)\lambda+\kappa_\alpha^2\tau_\alpha=0,$$
or, because of (\ref{eq88})  and (\ref{eq33}), $-\lambda^3+c_2\lambda^2-c_3\lambda+c_1=0$. 
Thus the real eigenvalues $\lambda_1$, $\lambda_2$ and $\lambda_3$ are three constants independent of $s$ and satisfy
\begin{equation}\label{eq41}
\left.
\begin{split}
\lambda_1+\lambda_2+\lambda_3&=c_2\\
\lambda_1\lambda_2+\lambda_1\lambda_3+\lambda_2\lambda_3&=c_3\\
\lambda_1\lambda_2\lambda_3&=c_1
\end{split}\right\}.
\end{equation}
Analogously the real eigenvalues $\mu_1$, $\mu_2$ and $\mu_3$ of $L_{\beta(t)}$ are   constants and satisfy
\begin{equation}\label{eq42}
\left.
\begin{split}
\mu_1+\mu_2+\mu_3&=\bar{c}_2\\
\mu_1\mu_2+\mu_1\mu_3+\mu_2\mu_3&=\bar{c}_3\\
\mu_1\mu_2\mu_3&=\bar{c}_1
\end{split}\right\}.
\end{equation}

\begin{remark} Since the cubic equation
$$-\lambda^3+c_2\lambda^2-c_3\lambda+c_1=0$$
has three real roots, its discriminant
$$\Delta=18c_1c_2c_3-4c_1c_2^3+c_2^2c_3^2-4c_3^3-27c_2^2$$
is non-negative. In the case where $\Delta=0$, the cubic equation has a multiple root.
\end{remark}

Now we prove the key property that all transformations   $L_{\alpha(s)}$ and $L_{\beta(t)}$ coincide for any $s$ and $t$.

\begin{proposition}\label{pr42}
Let  $\Psi(s,t)=\alpha(s)+\beta(t)$ be a minimal translation surface. Then  $L_{\alpha(s)}=L_{\beta(t)}$ for all $s\in I$ and $t\in J$.
\end{proposition}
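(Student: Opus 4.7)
The plan is to recognize the system $(\ref{eq39})$ as the coordinate version of a single bilinear identity asserting that $L_{\beta(t)}$ is the adjoint of $L_{\alpha(s)}$. First I would observe that, by inspecting the matrix of $L_{\alpha(s)}$ written in the Frenet basis $\{\t_\alpha(s),\n_\alpha(s),\b_\alpha(s)\}$, its three columns are precisely the components of $V_1(s)$, $V_2(s)$, $V_3(s)$ given in $(\ref{eq37})$; equivalently,
$$L_{\alpha(s)}\t_\alpha=V_1,\qquad L_{\alpha(s)}\n_\alpha=V_2,\qquad L_{\alpha(s)}\b_\alpha=V_3,$$
and analogously $L_{\beta(t)}\t_\beta=W_1$, $L_{\beta(t)}\n_\beta=W_2$, $L_{\beta(t)}\b_\beta=W_3$.

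With this reading, each of the nine scalar equations collected in $(\ref{eq39})$ takes the form
$$\langle L_{\alpha(s)} e_i,\, f_j\rangle=\langle e_i,\, L_{\beta(t)} f_j\rangle,$$
where $(e_1,e_2,e_3)=(\t_\alpha,\n_\alpha,\b_\alpha)$ and $(f_1,f_2,f_3)=(\t_\beta,\n_\beta,\b_\beta)$. Since both triples are orthonormal bases of $\r^3$, bilinearity at once extends this identity to
$$\langle L_{\alpha(s)}u,\, v\rangle=\langle u,\, L_{\beta(t)}v\rangle\quad\text{for all }u,v\in\r^3.$$
In other words, $L_{\beta(t)}$ is the Euclidean adjoint of $L_{\alpha(s)}$.

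To finish, I would invoke the self-adjointness of $L_{\alpha(s)}$, which holds because its matrix in the orthonormal Frenet basis is symmetric. Then for all $u,v\in\r^3$,
$$\langle u,\,L_{\alpha(s)}v\rangle=\langle L_{\alpha(s)}u,\,v\rangle=\langle u,\,L_{\beta(t)}v\rangle,$$
which forces $L_{\alpha(s)}v=L_{\beta(t)}v$ for every $v$, and hence $L_{\alpha(s)}=L_{\beta(t)}$ for all $s\in I$ and $t\in J$. I do not anticipate any real obstacle in this argument; its content is essentially a reinterpretation of $(\ref{eq39})$ as the coordinate form of an adjointness relation, and the only point requiring care is the bookkeeping check that the three columns of the matrix of $L_{\alpha(s)}$ match the definitions of $V_1,V_2,V_3$ in the correct order (and likewise for $L_{\beta(t)}$ and $W_1,W_2,W_3$), which is a direct inspection using the symmetry of the displayed matrices.
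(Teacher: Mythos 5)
Your proposal is correct and follows essentially the same route as the paper: both read the nine identities of (\ref{eq39}) as $\langle L_{\alpha(s)}e_i,f_j\rangle=\langle e_i,L_{\beta(t)}f_j\rangle$ on the two Frenet bases, extend by bilinearity to conclude that $L_{\beta(t)}$ is the adjoint of $L_{\alpha(s)}$, and then invoke the self-adjointness of $L_{\alpha(s)}$ to get equality.
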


\begin{proof} We shall prove that $L_{\beta(t)}$ is the adjoint of $L_{\alpha(s)}$ for any $s$ and $t$, and since $L_{\alpha(s)}$ is self-adjoint, we conclude $L_{\alpha(s)}=L_{\beta(t)}$, proving the result. 

Thus we need to show that $\langle L_{\alpha(s)}(v),w\rangle=\langle v,L_{\beta(t)}(w)\rangle$ for all $v,w\in\r^3$.
Let
$$v=a_1\t_\alpha(s)+a_2\n_\alpha(s)+a_3\b_\alpha(s),\ \ w=b_1\t_\beta(t)+b_2\n_\beta(t)+b_3\b_\beta(t),$$
where $a_i=a_i(s),b_i=b_i(t)\in\r$. Then
$$
\langle L_\alpha(v),w\rangle=\langle a_1L_\alpha(\t_\alpha)+a_2 L_\alpha(\n_\alpha)+a_3L_\alpha(\b_\alpha),b_1\t_\beta+b_2\n_\beta+b_3\b_\beta\rangle,$$
where we omit the dependence on $s$ and $t$. In the right hand side of this identity appears   nine summands. For each on them, we use the definition of $L_{\alpha}$ and the relations (\ref{eq39}). For instance,  we find for the first summand that
$$\langle L_\alpha(\t_\alpha),\t_\beta\rangle=\langle\kappa_\alpha\b_\alpha,\t_\beta\rangle=\langle\t_\alpha,\kappa_\beta\b_\beta\rangle=\langle\t_\alpha,L_\beta(\t_\beta)\rangle.$$
On applying this argument summand-by-summand, we see that
$$\langle L_\alpha(v), w\rangle=\langle v,b_1 L_\beta(\t_\beta)+b_2 L_\beta(\n_\beta)+b_3 L_\beta(\b_\beta)\rangle=\langle v,L_\beta(w)\rangle,$$
as desired. 
\end{proof}

 Because of $L_{\alpha(s)}=L_{\beta(t)}$ for all $s,t$, we conclude that the eigenvalues of $L_{\alpha(s)}$ and $L_{\beta(t)}$ coincide. Let $\lambda_1$, $\lambda_2$ and $\lambda_3$ denote the three eigenvalues. It follows from (\ref{eq41}) and (\ref{eq42}) that   $c_i=\bar{c}_i$, $1\leq i\leq3$.  Moreover, $L_{\alpha(s)}$, $L_{\beta(t)}$ have a common eigensystem independent of $s$ and $t$, for all $s\in I$, $t\in J$.

Consider now  the common eigensystem of $L_\alpha$ and $L_\beta$ as an orthonormal reference system. With respect to this system, we write $\alpha$ in coordinates, say,  $\alpha(s)=(\alpha_1(s),\alpha_2(s),\alpha_3(s))$, being $s$ the arc length. Then
$$\t_\alpha(s)=\alpha'(s)=(\alpha'_1(s),\alpha'_2(s),\alpha'_3(s)),$$
and
$$\kappa_\alpha(s)\b_\alpha(s)=L_\alpha(\t_\alpha(s))=(\lambda_1\alpha_1'(s),\lambda_2\alpha_2'(s),\lambda_3\alpha_3'(s)).$$
The identities $\langle\t_\alpha,\t_\alpha\rangle=1$ and $\langle\t_\alpha,\kappa_\alpha\b_\alpha\rangle=0$ write as
\begin{equation}\label{eq43}
\alpha'_1(s)^2+\alpha'_2(s)^2+\alpha'_3(s)^2=1
\end{equation}
and
\begin{equation}\label{eq44}
\lambda_1\alpha'_1(s)^2+\lambda_2\alpha'_2(s)^2+\lambda_3\alpha'_3(s)^2=0,
\end{equation}
respectively. Because of the third of (\ref{eq41}) and (\ref{eq44}), we conclude that all $\lambda_i$, $1\leq i\leq 3$, are non-zero and without the same sign. In the case where $c_1>0$, by renumbering the axis, we may choose $\lambda_1\leq\lambda_2<0<\lambda_3$.
Analogously, if $c_1<0$, we may choose $\lambda_1\geq\lambda_2>0>\lambda_3$. Set 
$$A=\sqrt{\frac{\lambda_3}{\lambda_3-\lambda_1}},\ B=\sqrt{\frac{\lambda_3}{\lambda_3-\lambda_2}}.$$
Taking into account (\ref{eq43}) and (\ref{eq44}), we may assume that
\begin{eqnarray*}
\alpha'_1(s)&=&A\cos w(s)\\
\alpha'_2(s)&=&B\sin w(s)\\
 \alpha'_3(s)&=&\sqrt{1-A^2\cos^2 w(s)-B^2\sin^2w(s)}.
 \end{eqnarray*}
Obviously, we have $\alpha'_3(s)\not=0$ everywhere. We briefly write
$$
\alpha'(s)=(A\cos w(s),B\sin w(s),\alpha'_3(s)).
$$
In order to calculate   $\kappa_\alpha$ and  $\tau_\alpha$, we need the computations of $\alpha''$ and $\alpha'''$: 
$$\alpha''=w'\left(-A\sin w,B\cos w,\frac{(A^2-B^2)\cos w\sin w}{\alpha'_3}\right),$$
\begin{eqnarray*}
 \alpha'''&=&w'' \left(-A\sin w,B\cos w,\frac{(A^2-B^2)\cos w\sin w}{\alpha'_3}\right)\\
&+&w'^2\left( 
-A\cos w,-B\sin w,\dfrac{(A^2-B^2)\left((1-A^2\cos^2 w)\cos^2 w-(1-B^2\sin^2w)\sin^2w\right)}{\alpha_3'^3}\right).
\end{eqnarray*}
Then
\begin{equation}\label{eq46}
\alpha'(s)\times\alpha''(s)=\frac{w'(s)}{\alpha'_3(s)}\left(B(A^2-1)\cos w(s), A(B^2-1)\sin w(s), AB\alpha'_3(s)\right)
\end{equation}
and

\begin{equation}\label{eq466}
(\alpha'(s),\alpha''(s),\alpha'''(s))=AB(1+A^2B^2-A^2-B^2)\left(\frac{w'(s)}{\alpha'_3(s)}\right)^3.
\end{equation}
Since $\tau_\alpha=(\alpha'(s),\alpha''(s),\alpha'''(s))/\kappa_\alpha^2$, it follows from  (\ref{eq31}) that  $(\alpha'(s),\alpha''(s),\alpha'''(s))=c_1$. The computation of the right hand side of (\ref{eq466})  using the third of (\ref{eq41}) yields 
$$\left(\frac{w'(s)}{\alpha'_3(s)}\right)^3=\left((\lambda_3-\lambda_1)(\lambda_3-\lambda_2)\right)^{3/2},$$
or, equivalently,
\begin{equation}\label{eq47}
\frac{w'(s)}{\alpha'_3(s)}=\sqrt{(\lambda_3-\lambda_1)(\lambda_3-\lambda_2)}.
\end{equation}

By a direct calculation from (\ref{eq46}), we obtain
$$\kappa_\alpha(s)^2=\left(\frac{w'(s)}{\alpha'_3(s)}\right)^2(A^2+B^2-A^2B^2-1+\alpha'_3(s))^2,$$
which can be written as
$$\kappa_\alpha(s)^2-w'(s)^2=(A^2+B^2-A^2B^2-1)\left(\frac{w'(s)}{\alpha'_3(s)}\right)^2.$$
Using the value of $A$ and $B$ and (\ref{eq47}), the above equation reduces into
 $\kappa_\alpha(s)^2-w'(s)^2=-\lambda_1\lambda_2$, hence
$$w'(s)=\sqrt{\kappa_\alpha(s)^2+\lambda_1\lambda_2}.
$$
Similarly, the above argument applies   to the curve $\beta$.

In the meantime, the curvatures $\kappa_\alpha(s)$, $\kappa_\beta(t)$  are positive solutions of the autonomous ODE
$$y'^2+y^4+c_3y^2+\frac{c_1^2}{y^{2}}+c_1c_2=0.$$
Using the value of $c_i$ from (\ref{eq41}), this equation is equivalent to
\begin{equation}\label{eq49}
y'^2+\frac{1}{y^2}(y^2+\lambda_1\lambda_2)(y^2+\lambda_1\lambda_3)(y^2+\lambda_2\lambda_3)=0.
\end{equation}
The positive equilibrium solutions are
$y_1=\sqrt{-\lambda_1\lambda_3}$, $y_2=\sqrt{-\lambda_2\lambda_3}$, which give stationary solutions of (\ref{eq49}). So the positive solutions $\kappa_\alpha(s)$ and $\kappa_\beta(t)$ are included in the   strip bounded by   the values $y_1$, $y_2$ and $\kappa_\beta(t)$ is a horizontal translate of $\kappa_\alpha(s)$. That is, $\kappa_\beta(t)=\kappa_\alpha(\pm t+c_0)$,    $c_0\in\r$. By a reparameterization of $\beta$, we conclude that $\kappa_\beta(t)=\kappa_\alpha(t)$ and thus $\tau_\beta(t)=\tau_\alpha(t)$ from (\ref{eq88}). Hence the generating curves $\alpha$ and $\beta$ are congruent.

Summarizing, we have proved the following classification result.

\begin{theorem}\label{t44} Let $\Psi(s,t)=\alpha(s)+\beta(t)$ be a minimal translation surface with $\alpha$ and $\beta$ parameterized by arc length. Suppose   $\kappa_\alpha, \kappa_\beta>0$  and  $\tau_\alpha\not=0, \tau_\beta\not=0$ everywhere. Then:
\begin{enumerate}
\item There are two constants $c_1, c_2\in\r$,  $c_1\not=0$, such that
$$\kappa_\alpha^2\tau_\alpha=\kappa_\beta^2\tau_\beta=c_1,\quad  \frac{\Sigma_\alpha}{\tau_\alpha}-\tau_\alpha=\frac{\Sigma_\beta}{\tau_\beta}-\tau_\beta=c_2.$$

\item The curvature $\kappa_\alpha$, $\kappa_\beta$ are positive solutions of the autonomous ODE
$$y'^2+y^4+c_3y^2+\frac{c_1^2}{y^{2}}+c_1c_2=0$$
for some constant $c_3$, and the curves $\alpha$ and $\beta$ have the same orbit.
\item Up to a rigid motion, we have
$$\alpha(s)=\left(A\int^s\cos w(s),B\int^s\sin w(s),\int^s\sqrt{1-A^2\cos^2 w(s)-B^2\sin^2w(s)}\right),$$
$$\beta(s)=\left(A\int^t\cos w(t),B\int^t\sin w(t),\int^t\sqrt{1-A^2\cos^2 w(t)-B^2\sin^2w(t)}\right),$$
where
$$A=\sqrt{\frac{\lambda_3}{\lambda_3-\lambda_1}},\ B=\sqrt{\frac{\lambda_3}{\lambda_3-\lambda_2}}$$
and $\lambda_1\leq\lambda_2<0<\lambda_3$ (resp. $\lambda_1\geq\lambda_2>0>\lambda_3$) are the real roots of the cubic equation
$-\lambda^3+c_2\lambda^2-c_3\lambda+c_1=0$ and
\begin{equation}\label{eqw}
w(s)=\int^s\sqrt{\kappa_\alpha(s)^2+\lambda_1\lambda_2}.
\end{equation}
\end{enumerate}
\end{theorem}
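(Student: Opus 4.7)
My plan is to assemble Theorem \ref{t44} from the pieces already built up in Sections \ref{se3} and the part of Section \ref{se4} that precedes the statement. Item (1) is literally Proposition \ref{pr34}, so no new argument is needed: the minimality condition together with successive differentiations gives $\kappa_\alpha^2\tau_\alpha=c_1$, $\kappa_\beta^2\tau_\beta=\bar c_1$ and the analogous expressions for $c_2,\bar c_2$. For item (2) I would then invoke Proposition \ref{pr36} (applied separately to $\alpha$ and $\beta$) to obtain the autonomous ODE \eqref{eq32} with constants $\sigma_i=c_i$ and $\sigma_i=\bar c_i$; the remaining task is to show $c_i=\bar c_i$ for $i=1,2,3$, after which both curvatures will solve the \emph{same} ODE.

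For this step I would use the self-adjoint operators $L_{\alpha(s)},L_{\beta(t)}$ introduced after the statement of the theorem, whose characteristic polynomials are $-\lambda^3+c_2\lambda^2-c_3\lambda+c_1$ and $-\lambda^3+\bar c_2\lambda^2-\bar c_3\lambda+\bar c_1$ respectively, and whose eigenvalues are therefore constant in $s$ and $t$. By Proposition \ref{pr42} the two operators coincide at every pair $(s,t)$, so their eigenvalues are the same triple $\lambda_1,\lambda_2,\lambda_3$; by \eqref{eq41}--\eqref{eq42} this forces $c_i=\bar c_i$, so $\kappa_\alpha$ and $\kappa_\beta$ indeed satisfy one and the same autonomous ODE. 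Moreover, the equality $L_\alpha=L_\beta$ provides a \emph{common} orthonormal eigensystem, independent of $s$ and $t$, in which both Frenet frames can be analyzed simultaneously.

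To get item (3), I would take coordinates with respect to this common eigenbasis. Writing $\alpha(s)=(\alpha_1,\alpha_2,\alpha_3)$, the identities $|\alpha'|^2=1$ and $\langle\alpha',L_\alpha(\alpha')\rangle=\langle\alpha',\kappa_\alpha\b_\alpha\rangle=0$ become the two linear equations \eqref{eq43}--\eqref{eq44} in $(\alpha_i')^2$; since $\lambda_1\lambda_2\lambda_3=c_1\neq 0$ the $\lambda_i$ are non-zero and, by \eqref{eq44}, cannot have the same sign. After ordering them as in the statement I can solve for $(\alpha_1',\alpha_2',\alpha_3')$ in terms of a single angle $w(s)$ via $\alpha_1'=A\cos w$, $\alpha_2'=B\sin w$ with $A,B$ as specified. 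Computing $\alpha'\times\alpha''$ and the mixed product $(\alpha',\alpha'',\alpha''')$, the identity $\tau_\alpha\kappa_\alpha^2=c_1$ together with Vieta's formulas \eqref{eq41} gives $w'(s)/\alpha_3'(s)=\sqrt{(\lambda_3-\lambda_1)(\lambda_3-\lambda_2)}$, and then $|\alpha'\times\alpha''|^2=\kappa_\alpha^2$ yields $w'(s)^2=\kappa_\alpha(s)^2+\lambda_1\lambda_2$, i.e. \eqref{eqw}. The same derivation works verbatim for $\beta$.

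The only point that still needs attention is the assertion that $\alpha$ and $\beta$ have the same orbit, which I expect to be the main obstacle. Here I would rewrite the autonomous ODE using Vieta as in \eqref{eq49}: $y'^2+y^{-2}(y^2+\lambda_1\lambda_2)(y^2+\lambda_1\lambda_3)(y^2+\lambda_2\lambda_3)=0$. With the sign convention on the $\lambda_i$, the non-negative polynomial on the right side vanishes exactly at the two positive equilibria $y_1=\sqrt{-\lambda_1\lambda_3}$ and $y_2=\sqrt{-\lambda_2\lambda_3}$, so every non-constant positive solution stays in the open strip $(y_1,y_2)$ (or $(y_2,y_1)$) and, by the autonomous nature of the ODE, any two such solutions differ by a horizontal translation and possibly a sign change $s\mapsto -s$. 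Consequently $\kappa_\beta(t)=\kappa_\alpha(\pm t+c_0)$, and a reparameterization of $\beta$ absorbs this freedom; by $\kappa_\beta^2\tau_\beta=c_1=\kappa_\alpha^2\tau_\alpha$ the torsions then match as well, so by the fundamental theorem of space curves $\beta$ is congruent to $\alpha$, completing item (3) and the theorem.
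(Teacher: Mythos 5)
Your proposal is correct and follows essentially the same route as the paper: item (1) from Proposition \ref{pr34}, the identification $c_i=\bar c_i$ and the common eigensystem from Proposition \ref{pr42} together with \eqref{eq41}--\eqref{eq42}, the ODE from Proposition \ref{pr36}, the coordinate expression of $\alpha'$ from \eqref{eq43}--\eqref{eq44}, and the same-orbit claim via the equilibria of \eqref{eq49} plus the fundamental theorem of space curves. No substantive differences from the paper's argument.
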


In the sequel, we will prove one more result which is the converse of Theorem \ref{t44} and, by the way, it provides an useful tool for constructing minimal translation surfaces.

\begin{theorem}\label{t45} Suppose that $c_1\not=0$, $c_2$ and $c_3$ are constants such that the cubic equation
\begin{equation}\label{eq50}
-\lambda^3+c_2\lambda^2-c_3\lambda+c_1=0
\end{equation}
has three real roots $\lambda_1$, $\lambda_2$, $\lambda_3$. Consider the autonomous ODE
\begin{equation}\label{eq51}
y'^2+y^4+c_3y^2+\frac{c_1^2}{y^2}+c_1c_2=0
\end{equation}
and let $\kappa_\alpha(s)=\kappa_\alpha(s;c_1,c_2,c_3)$ be a positive and non-constant solution of (\ref{eq51}). Denote by $\alpha(s)$ the curve parameterized by arc length $s$ with curvature $\kappa_\alpha(s)$ and torsion $\tau_\alpha(s)=c_1/\kappa_\alpha(s)^2$. Then the translation surface $\Psi(s,t)=\alpha(s)+\alpha(t)$ is minimal.
\end{theorem}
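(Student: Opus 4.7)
The strategy is to show that the self-adjoint operator $L_{\alpha(s)}:\r^3\to\r^3$ introduced in Section \ref{se4} is, under the present hypotheses, \emph{constant in $s$}; once this rigidity is established, the minimality of $\Psi(s,t)=\alpha(s)+\alpha(t)$ follows at once from self-adjointness.

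The first step activates Proposition \ref{pr36} in its converse direction. Since $\kappa_\alpha$ is a positive non-constant solution of (\ref{eq51}) and $\tau_\alpha=c_1/\kappa_\alpha^2$, we immediately obtain the two scalar identities $\Sigma_\alpha/\tau_\alpha-\tau_\alpha=c_2$ and $\Sigma_\alpha+R_\alpha^2+\kappa_\alpha^2=-c_3$. Differentiating $\kappa_\alpha^2\tau_\alpha=c_1$ yields $R_\alpha=-\kappa_\alpha'/\kappa_\alpha$, and combining this with the definitions in (\ref{eqR}) recovers the three relations (\ref{eq18}). These are precisely the identities under which the vectors $V_1,V_2,V_3$ of (\ref{eq37}) satisfy the Frenet-type equations
$$V_1'=\kappa_\alpha V_2,\qquad V_2'=-\kappa_\alpha V_1+\tau_\alpha V_3,\qquad V_3'=-\tau_\alpha V_2$$
already recorded in Section \ref{se3}; one verifies this by a direct expansion of each $V_i'$ against the Frenet equations for $\t_\alpha,\n_\alpha,\b_\alpha$.

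The second step is to show that $L_{\alpha(s)}$ is independent of $s$ as an operator on $\r^3$. For any fixed $v\in\r^3$, expand in the moving Frenet basis to write
$$L_{\alpha(s)}(v)=\langle v,\t_\alpha(s)\rangle\, V_1(s)+\langle v,\n_\alpha(s)\rangle\, V_2(s)+\langle v,\b_\alpha(s)\rangle\, V_3(s).$$
Differentiating in $s$ and applying simultaneously the Frenet equations for $\{\t_\alpha,\n_\alpha,\b_\alpha\}$ and the Frenet-type equations for $\{V_1,V_2,V_3\}$ established above, every resulting summand cancels in pairs, so $\tfrac{d}{ds}L_{\alpha(s)}(v)=0$. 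Hence $L_{\alpha(s)}\equiv\mathcal{L}$ for a single self-adjoint endomorphism $\mathcal{L}$ of $\r^3$, independent of $s$.

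The third and final step is immediate: from $L_{\alpha(s)}(\t_\alpha(s))=V_1(s)=\kappa_\alpha(s)\b_\alpha(s)$ and the self-adjointness of $\mathcal{L}$,
$$\kappa_\alpha(s)\langle\b_\alpha(s),\t_\alpha(t)\rangle=\langle\mathcal{L}\t_\alpha(s),\t_\alpha(t)\rangle=\langle\t_\alpha(s),\mathcal{L}\t_\alpha(t)\rangle=\kappa_\alpha(t)\langle\t_\alpha(s),\b_\alpha(t)\rangle,$$
which is precisely the minimality condition (\ref{eq1}) for $\Psi(s,t)=\alpha(s)+\alpha(t)$. The subtle point in the whole argument is the second step: the matrix of $L_{\alpha(s)}$ in the Frenet basis has $s$-dependent entries, the Frenet basis itself rotates, and it is not at all transparent that the induced operator on $\r^3$ should nevertheless be constant. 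That rigidity is exactly what the Frenet-type equations for the $V_i$ encode, and it explains why the two scalar identities of Proposition \ref{pr36}, together with $\kappa_\alpha^2\tau_\alpha=c_1$, are sufficient to force minimality without ever invoking the explicit parameterization of Theorem \ref{t44}.
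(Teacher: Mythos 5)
Your proposal is correct, and its first two steps coincide with the paper's: you invoke the converse of Proposition \ref{pr36} to obtain $\kappa_\alpha^2\tau_\alpha=c_1$, $\Sigma_\alpha/\tau_\alpha-\tau_\alpha=c_2$ and $\Sigma_\alpha+R_\alpha^2+\kappa_\alpha^2=-c_3$, and you then prove that the self-adjoint operator $L_{\alpha(s)}$ is independent of $s$ (the paper does this by differentiating (\ref{eq53}) directly and finding $L'_{\alpha(s)}=0$ on the Frenet frame, which is the same cancellation you organize through the Frenet-type equations for $V_1,V_2,V_3$). Where you genuinely diverge is the final step. The paper extracts minimality from the constancy of $L_\alpha$ by passing to its eigenbasis, writing $\alpha'(s)=(A\cos w(s),B\sin w(s),\alpha_3'(s))$ explicitly, computing the two triple products in condition (\ref{eq2}), and checking that $w'(s)/\alpha_3'(s)$ equals the constant $\sqrt{(\lambda_3-\lambda_1)(\lambda_3-\lambda_2)}$ for both arguments. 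You instead obtain the minimality condition (\ref{eq1}) in one line from $\langle\mathcal{L}\t_\alpha(s),\t_\alpha(t)\rangle=\langle\t_\alpha(s),\mathcal{L}\t_\alpha(t)\rangle$, which is cleaner: it avoids the explicit parameterization entirely, and it never actually uses the hypothesis that the cubic (\ref{eq50}) has three real roots (which is in any case automatic, since the matrix of $L_{\alpha(s)}$ in the orthonormal Frenet basis is real symmetric). What the paper's longer route buys in exchange is the explicit formula for $\alpha$ in the eigenbasis, which is the point of the theorem as a construction recipe; your argument establishes minimality but would still need that computation to produce concrete examples. One shared loose end, inherited from the paper's standing assumption: condition (\ref{eq1}) characterizes $H=0$ only where $\t_\alpha(s)\times\t_\alpha(t)\neq 0$, so the conclusion holds on the open set where $\Psi$ is an immersion (in particular away from the diagonal $s=t$).
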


\begin{proof} The autonomous ODE (\ref{eq51}) takes the form (\ref{eq49}). Hence   $\lambda_i\not=0$ and without the same sign. In the case where $c_1>0$, we may choose $\lambda_1\leq\lambda_2<0<\lambda_3$ (analogously, if $c_1<0$, we choose $\lambda_1\geq\lambda_2>0>\lambda_3$). By the converse of Proposition \ref{pr36},  we deduce for the curve $\alpha$ that 
$$\kappa_\alpha^2\tau_\alpha=c_1,\quad \frac{\Sigma_\alpha}{\tau_\alpha}-\tau_\alpha=c_2,\quad  \Sigma_\alpha+R_\alpha^2+\kappa_\alpha^2=-c_3.
$$
At the point $\alpha(s)$, we define the linear transformation $L_{\alpha(s)}$ by the relations
\begin{equation}\label{eq53}
\left.
\begin{split}
L_{\alpha(s)}(\t_\alpha(s))&=\kappa_\alpha(s)\b_\alpha(s)\\
L_{\alpha(s)}(\n_\alpha(s))&=-\tau_\alpha(s)\n_\alpha(s)-R_\alpha(s)\b_\alpha(s)\\
L_{\alpha(s)}(\b_\alpha(s))&=\kappa_\alpha(s)\t_\alpha(s)-R_\alpha(s)\n_\alpha(s)+\frac{\Sigma_\alpha}{\tau_\alpha}(s)\b_\alpha(s)
\end{split}\right\}.
\end{equation}
The matrix of this transformation with respect to the basis $\{\t_\alpha(s),\n_\alpha(s),\b_\alpha(s)\}$ is symmetric and thus any $L_{\alpha(s)}$ is self-adjoint. The characteristic equation of $L_{\alpha(s)}$ is (\ref{eq50}) for any $s$. Moreover, by differentiation of (\ref{eq53}) with respect to $s$, and taking into account the Frenet equations, we find
$$L'_{\alpha(s)}(\t_\alpha(s))=0,\ L'_{\alpha(s)}(\n_\alpha(s))=0,\ L'_{\alpha(s)}(\b_\alpha(s))=0.$$
Thus, $L_\alpha=L_{\alpha(s)}$   is a constant transformation and has  a constant eigensystem for any $s$. Taking the eigensytem as the reference system as in Theorem \ref{t44}, we obtain
$\alpha'(s)=\left(A\cos w(s),B\sin w(s),\alpha'_3(s)\right),$
where
$$\alpha'_3(s)=\sqrt{1-A^2\cos^2w(s)-B^2\sin^2w(s)},$$
and $A=\sqrt{\lambda_3/(\lambda_3-\lambda_1)}$ and $B=\sqrt{ \lambda_3/(\lambda_3-\lambda_2)}$. Moreover,  
$$(\alpha'(s),\alpha''(s),\alpha'''(s))=\kappa_\alpha^2\tau_\alpha=c_1=\lambda_1\lambda_2\lambda_3=AB(1+A^2B^2-A^2-B^2)\left(\frac{w'(s)}{\alpha'_3(s)}\right)^3,$$
 hence
$$\kappa_\alpha(s)^2-w'(s)^2=-\lambda_1\lambda_2.$$
We now prove that the surface $\Psi(s,t)=\alpha(s)+\alpha(t)$ is minimal. The condition $H=0$ in    (\ref{eq2}) is now
$\langle\alpha'(s)\times \alpha''(s),\alpha'(t)\rangle=\langle \alpha'(s),\alpha'(t)\times \alpha''(t)\rangle$. The computation of both Euclidean products give
\begin{eqnarray*}
&&\langle\alpha'(s)\times \alpha''(s),\alpha'(t)\rangle=\\
&&\frac{AB w'(s)}{\alpha_3'(s)}\left(
(A^2-1)\cos w(s)\cos w(t)+(B^2-1)\sin w(s)\sin w(t)+\alpha_3'(s)\alpha_3'(t)\right).
\end{eqnarray*}
\begin{eqnarray*}
&&\langle\alpha'(s),\alpha'(t)\times\alpha''(t)\rangle=\\
&&\frac{AB w'(t)}{\alpha_3'(t)}\left(
(A^2-1)\cos w(s)\cos w(t)+(B^2-1)\sin w(s)\sin w(t)+\alpha_3'(s)\alpha_3'(t)\right).
\end{eqnarray*}
Thus the surface is minimal if and only if we prove that  $w'(s)/\alpha_3'(s)=w'(t)/\alpha_3'(t)$ for all $s$ and $t$. However this holds  because of (\ref{eq47}), we deduce 
 $$\frac{w'(s)}{\alpha'_3(s)}=\sqrt{(\lambda_3-\lambda_1)(\lambda_3-\lambda_2)}=\frac{w'(t)}{\alpha_3'(t)}.$$
\end{proof}

\begin{remark}
 If the characteristic equation (\ref{eq50}) has a double root, that is, $\lambda_1=\lambda_2$, then $A=B=\sqrt{\lambda_3/(\lambda_3-\lambda_1)}$, $\alpha'(s)=
(A\cos w(s),A\sin w(s),\sqrt{1-A^2})$ and $\kappa_\alpha=A^2w'^2$. So, because of $w'^2=\kappa_\alpha^2+\lambda_1\lambda_2$, we see that  $\kappa_\alpha^2=-\lambda_1\lambda_3$ and $\tau_\alpha=-\lambda_1$. Since the curvature and torsion are constant,  the curve $\alpha$ is a circular helix. On the other hand,  the autonomous ODE (\ref{eq51}) becomes
$$y'^2+\frac{1}{y^2}(y^2+\lambda_1^2)(y^2+\lambda_1\lambda_3)^2=0,$$
we conclude that there are no non-constant solutions of (\ref{eq51}).
\end{remark}

 We finish this paper showing explicit examples of  the  procedure for constructing translation minimal surfaces with non-planar generating curves according Theorem \ref{t45}. In a first step, and looking for examples of minimal translation surfaces, recall that by the item 1 of Theorem \ref{t44},  if the generating curve $\alpha$ has constant curvature (resp. constant torsion), then its torsion (resp. curvature) is constant as well, hence the curve is a  circular helix and the resulting surface is a helicoid by Theorem \ref{t32}.

 Fixing the constants $c_i$ is equivalent to fix the roots $\lambda_i$ of the cubic polynomial  (\ref{eq50}).
\begin{remark} The family of minimal translation surfaces is constructed in terms of the roots of the cubic polynomial equation $-\lambda^3+c_2\lambda^2-c_3\lambda+c_1=0$. After a homothety of the ambient space $\r^3$, which preserves the minimality of the surface and the property to be a translation surface, we can fix one of the roots of this equation. As a consequence, the minimal translation surfaces is parameterized by two parameters.
\end{remark}

Following Theorems \ref{t44} and \ref{t45}, we present here the scheme for constructing examples of minimal translation surfaces in Euclidean space. 
\begin{enumerate}
\item[{\bf Step 1:}] Fix the roots $\lambda_i$ of (\ref{eq50}). By simplicity, we may  consider $\lambda_1\leq\lambda_2<0<\lambda_3$. The root $\lambda_3$ will be fixed to be $\lambda_3=1$. Compute $A$, $B$.
\item[{\bf Step 2:}]  Compute $c_i$ and the polynomial equation (\ref{eq50}).
\item[{\bf Step 3:}]  Compute the equilibrium points $y_1=\sqrt{-\lambda_2\lambda_3}$ and $y_2=\sqrt{-\lambda_1\lambda_3}$ of (\ref{eq51}).
\item[{\bf Step 4:}]  Fix $y_0$ the initial value of (\ref{eq51}), where $y_1<y_0<y_2$.
\item[{\bf Step 5:}]  Solve numerically the equations (\ref{eq51}). Fix a initial value $w_0$ to solve numerically   (\ref{eqw}) and the function $w$.
\item[{\bf Step 6:}]  Solve the curve $\alpha$.
\end{enumerate}

{\bf Example 1.} Case of helicoid. Choose a double root $\lambda_1=\lambda_2=-1$. Then (\ref{eq50}) is $-\lambda^3-\lambda^2+\lambda+1=0$ and $A=B=1/\sqrt{2}$. The equilibrium points as $y_1=y_2=1$. Thus take $y_0=1$ as initial condition in (\ref{eq51}). Then the solution is $\kappa(s)=1$, so $\tau=1$.

\begin{figure}[hbtp]\begin{center}
\includegraphics[width=.7\textwidth]{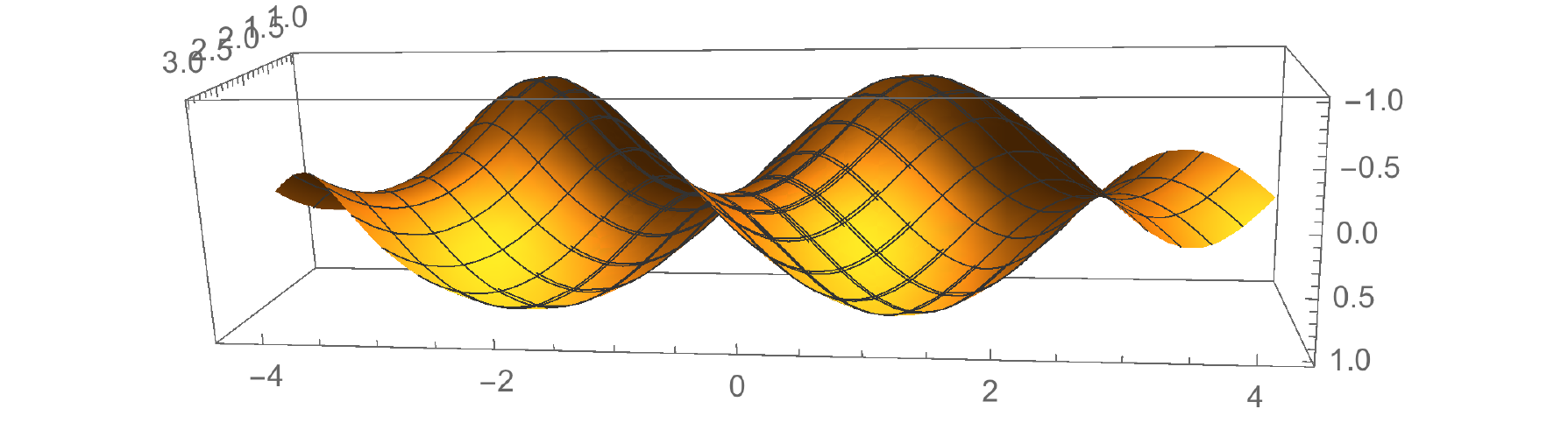}
\end{center}
\caption{The helicoid}
\end{figure}

{\bf Example 2.} Take  $\lambda_1=-4$ and $\lambda_2=-1$. Then (\ref{eq50}) is $-\lambda^3-4\lambda^2+\lambda+4=0$ and $A=0.447$ and $B=0.707$. The equilibrium points as $y_1=1$ and $y_2=2$. Choose $y_0=1.3$ as initial condition in (\ref{eq51}).
\begin{figure}[hbtp]\begin{center}
\includegraphics[width=.7\textwidth]{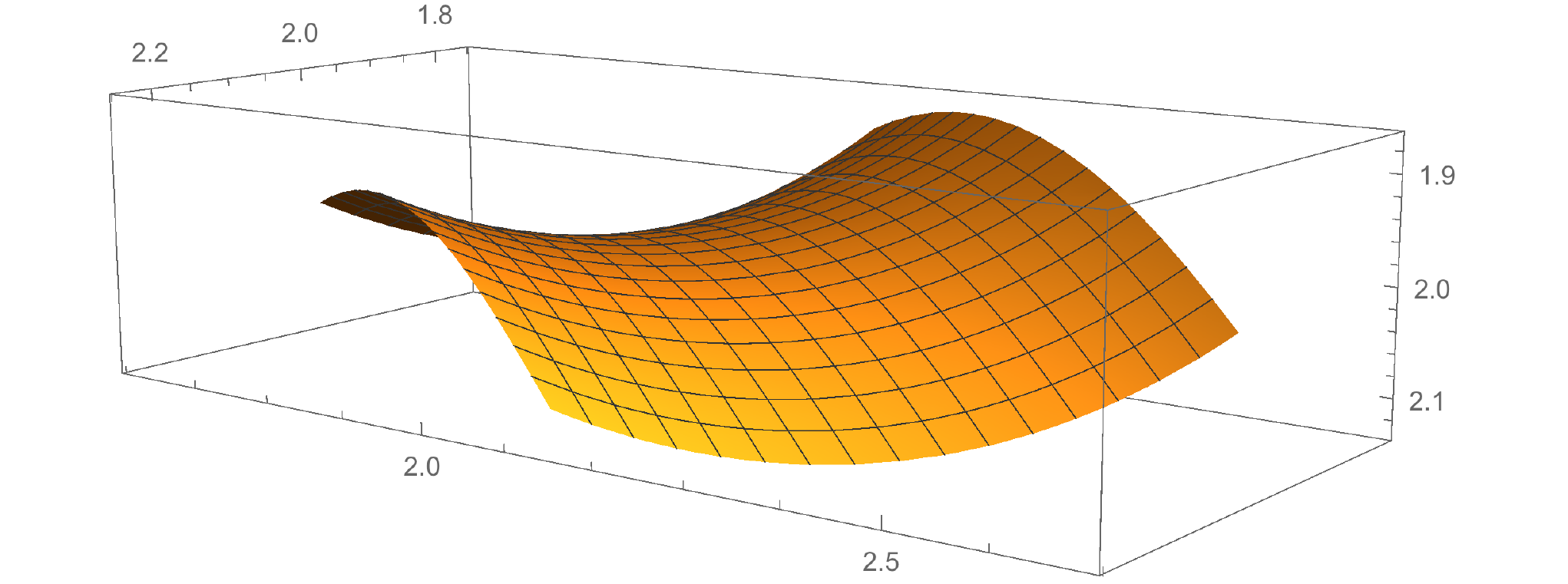}
\end{center}
\caption{Case $\lambda_1=-4$, $\lambda_2=-1$ and $\lambda_3=1$}
\end{figure}

{\bf Example 3.} Consider $\lambda_1=-2$ and $\lambda_2=-1$.  Then the polynomial is $p[\lambda]=-\lambda^3-2\lambda^2+\lambda+2$. The equilibrium points as $y_1=1.412$ and $y_2=1$. Also $A=0.577$ and $B=0.707$. The initial value is $y_0=1.1$.

\begin{figure}[hbtp]\begin{center}
\includegraphics[width=.7\textwidth]{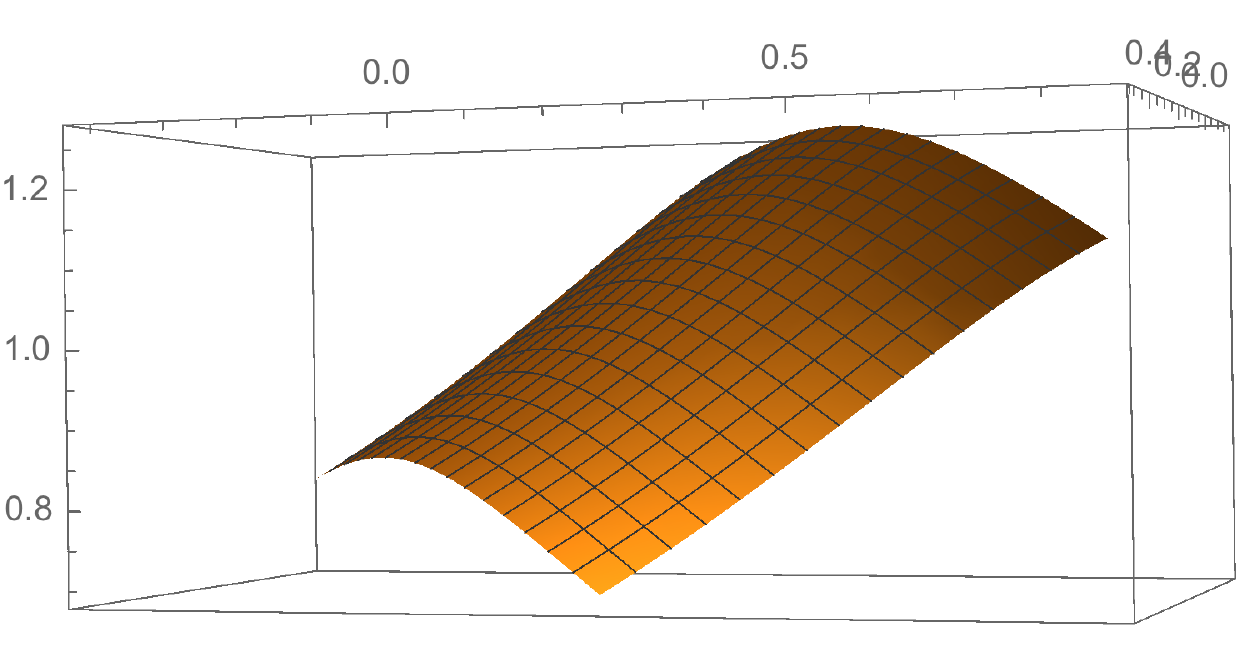}
\end{center}
\caption{Case $\lambda_1=-2$, $\lambda_2=-1$ and $\lambda_3=1$}\end{figure}



\begin{thebibliography}{00}


\bibitem{da}	G. Darboux, Le\c{c}ons sur la Th\'eorie G\'en\'erale des Surfaces et ses Applications G\'eom\'etriques du Calcul Infinit\'esimal, vol. 1--4, Chelsea Publ. Co, reprint, 1972.

\bibitem{Dillen98} F. Dillen, I. Van de Woestyne, L. Verstraelen and J. T. Walrave, The surface of Scherck in $E^3$: a special case in the class of minimal surfaces defined as the sum of two curves, Bull. Inst. Math. Acad. Sin., 26 (1998), 257--267.




\bibitem{hl} T. Hasanis, R. L\'opez, Translation surfaces in Euclidean space with constant Gaussian curvature, preprint, 2017.


\bibitem{HLin} H. Liu, Translation surfaces with constant mean curvature in 3-dimensional spaces, J. Geom. 64 (1999), 141--149.

\bibitem{Lopez} R. L\'opez, Minimal translation surfaces in hyperbolic space, Beitr. Algebra Geom., 52 (2011), 105--112.

\bibitem{lo2} R. L\'opez,  Differential Geometry of curves and surfaces in Lorentz-Minkowski space, Int. Electron. J. Geom.   7 (2014), 44--107.


\bibitem{lomu} R. L\'opez, M. I.  Munteanu, Surfaces with constant mean curvature in Sol geometry. Differential Geom. Appl. 29 (2011), suppl. 1, S238--S245.

\bibitem{lp}  R. L\'opez and O. Perdomo,   Minimal translation surfaces in Euclidean space, J. Geom. Anal. 27 (2017), 2926--2937.

\bibitem{mr} S. Montiel, A. Ros, Curves and Surfaces, Graduate Studies in Mathematics
Volume 69,  American Mathematical Society, 2009.

\bibitem{mm} M. Moruz, M. I. Munteanu,   Minimal translation hypersurfaces in $E^4$. J. Math. Anal. Appl. 439 (2016),  798--812.

\bibitem{mp} M. I. Munteanu, O. Palmas,  G. Ruiz-Hern\'andez,  Minimal translation hypersurfaces in Euclidean space. Mediterr. J. Math. 13 (2016),  2659--2676.

\bibitem{ni} J. C. C. Nitsche, Lectures on Minimal Surfaces, Cambridge Univ. Press. Cambridge, 1989.

\bibitem{sc} H. F. Scherk, Bemerkungen \"{u}ber die kleinste Fl\"{a}che innerhalb gegebener Grenzen, J. Reine Angew. Math. 13 (1835), 185--208.



\end{thebibliography}
\end{document}